\theoremstyle{plain}
\newtheorem{theorem}{Theorem}
\newtheorem{proposition}{Proposition}
\newtheorem{lemma}[theorem]{Lemma}
\newtheorem{corollary}[theorem]{Corollary}
\theoremstyle{remark}
\newtheorem{remark}{Remark}
\newtheorem*{remark*}{Remark}
\newtheorem*{remarks*}{Remarks}
\newenvironment{remarks}{\begin{remarks*} \ \begin{itemize}}{\end{itemize}\end{remarks*}}
\theoremstyle{definition}
\newtheorem*{definition*}{Definition}
\newtheorem*{example*}{Example}% 
\newenvironment{examples*}{\begin{example*} \ \begin{itemize}}{\end{itemize}\end{example*}}
\newcommand{\calA}{\mathcal{A}}
\newcommand{\calD}{\mathcal{D}}
\newcommand{\calI}{\mathcal{I}}
\newcommand{\calN}{\mathcal{N}}
\newcommand{\C}{\mathbb{C}}
\newcommand{\Q}{\mathbb{Q}}
\newcommand{\R}{\mathbb{R}}
\newcommand{\Z}{\mathbb{Z}}
\DeclareMathOperator{\pr}{pr}
\newcommand{\tilda}{{\tilde{a}}}
\newcommand{\tildb}{{\tilde{b}}}
\newcommand{\tildc}{{\tilde{c}}}
\DeclareMathOperator{\Spec}{Spec} 
\def\clap#1{\hbox to 0pt{\hss#1\hss}}
\renewcommand{\P}{\mathbb{P}}
\DeclareMathOperator{\id}{id}
\DeclareMathOperator{\CH}{CH}
\DeclareMathOperator{\cube}{\square}%{\Box}%
\DeclareMathOperator{\cubeBar}{\overline{\cube}}%{\Box}%
\DeclareMathOperator{\Li}{Li}
\DeclareMathOperator{\dlog}{dlog}
\DeclareMathOperator{\spt}{spt}
\DeclareMathOperator{\Tot}{Tot}
\newcommand{\proj}{\mathit{proj}}
\newcommand{\alt}{\mathrm{alt}}
\newcommand{\sD}{{\calD}}
\newcommand{\pt}{\text{pt}}
\newcommand{\reg}{{\text{reg}}}
\newcommand{\Rm}{\mathbb{R}^{\raisebox{0.3ex}-}}
\newcommand{\Sh}{\mathrm{Sh}}      %Shuffles
\newcommand{\cl}{{\mathrm{cl}}}
\newcommand{\exclude}{\mathrm{omit}}
\newcommand{\scriptfrac}[2]{{\textstyle \frac{#1}{#2}}}
\newcommand{\ifShowAll}{\iftrue}
\begin{document}

%%%%%%%%%%%%%%%%%%%%%%%%%%%%%%%%%%%%%
%           TODO
%
%  - Decide whether to use american or british spelling
%
%
%%%%%%%%%%%%%%%%%%%%%%%%%%%%%%%%%%%%%%
\title{A commutative regulator map into \\ Deligne-Beilinson cohomology}
\author{Thomas Weißschuh}
%\address{Institut für Mathematik, Johannes Gutenberg Universität Mainz, Germany}
%\email{weisssth@uni-mainz.de}

\date{}
\maketitle

\begin{abstract}
In 1986, Bloch \cite{Bloch_AlgebraicCyclesAndBeilinsonsConjecture} gave an abstract definition of a map (regulator) from higher Chow groups to Deligne-Beilinson cohomology.
This map can be defined on the underlying complexes, % built out of currents (with the advantage of better functoriality), see King, El Zein/Zucker.
and Kerr, Lewis and M\"uller-Stach \cite{KLM_AbelJacobi} gave an explicit description of this map in terms of currents. 
Using a multiplicative version of the Deligne complex, we give a commutative version of the above map.

%Using a description of Deligne cohomology as a homotopy limit, we give a commutative version of the above map.
% 
%Their construction heavily relies on the Beilinson product $\bigcup_0$ on deligne cohomology. 
%Roughly they start with a map between weight 1 complexes and get the map on higher weights by repeatedly multiplying with the weight 1 map. 
%This note considers this construction with respect to various products, including a new product found by Levine, and compares the resulting regulators.
\end{abstract}

\tableofcontents

\newpage

\section*{Introduction}
For a algebraic manifold $U$ / $\C$ Beilinson \cite{BeilinsonHigherRegulatorsAndValuesOfLFunctions} defined rational motivic cohomology $H_M^i(U, \Q(n))$ and defined the regulator map into weight n Deligne-Beilinson cohomology of $U$, 
\[
    r_B : H^i_M(U,\Q(n)) \to H_\calD^i(U,\Q(n)).
\]
The terms occurring on either side of this map arise as the cohomology of certain complexes. 
It is suggested by Goncharov in \cite{Goncharov_Chow_polylogarithms_and_regulators} and \cite{Goncharov_Explicit_Regulator_Maps} that the regulator map should be induced by an explicitly defined map between these complexes\footnote{Bloch \cite{Bloch_AlgebraicCyclesAndBeilinsonsConjecture} proposed a cycle complex which computes motivic cohomology and even allows integral coefficients. He also constructed a map from the cohomology of this complex, the higher Chow groups, to the integral Deligne-Beilinson cohomology.}.
Kerr, Lewis and M\"uller-Stach \cite{KLM_AbelJacobi} gave such a map, using a variant of Bloch's higher Chow groups to compute motivic, and a cone-complex to compute Deligne-Beilinson cohomology. 
Although their map induces a graded-commutative map of graded-commutative algebras on cohomology, this is not true on the level of complexes. The aim of this note is to describe a map of partially defined graded-commutative dga's that on cohomology induces the same regulator as the one of \cite{KLM_AbelJacobi}.
We start by reviewing some complexes computing Deligne-Beilinson cohomology and motivic cohomology. 
On the motivic side, we work with Bloch's cycle complexes and in particular with their refinement $z^p_\R(U,\bullet)$ of higher Chow chains that have proper intersection with respect to some ''real faces''.
Then we give an abstract definition of such a regulator map that works for any family of complexes indexed by triples $(X,D,p)$ which satisfy a list of properties. 
Applying this construction to the complexes of higher Chow groups and Jannsen's complex of currents, we recover the regulator from \cite{KLM_AbelJacobi}.

To get a regulator between graded-commutative algebras, we replace Jannsen's complex by a complex $P_\calD$ that has a partially defined strictly graded-commutative product. 
The complex $P_\calD$ is related to Jannsen's complex $C_\calD$ by means of the evaluation map $ev$, which turns out to be a quasi-isomorphism after extending coefficients to $\Q$. 
The diagram formed by the two regulators and $ev$,
\[
    \vcenter{\xymatrix{z^p_\R(U,\bullet) \ar[r] \ar[rd]& P_D^{2p-\bullet} \ar[d]^{ev} \\ & C_\calD^{2p-\bullet}}}\quad ,
\]
although not commutative in general, commutes after passing to cohomology.

% Alternating Chow groups
Bloch's cycle complexes admit a partially defined intersection product, but this is graded-commutative only on cohomology. 
For rational coefficients however there exists subcomplexes $z^p_\R(U,\bullet)^{\alt}$ of alternating cycles that also compute motivic cohomology. These complexes are also endowed with a partially defined -- this time graded-commutative -- intersection product and we obtain the desired map as the restriction to these complexes. 

%For rational coefficients, motivic cohomology can also be computed by complexes of alternating cycles $z^p_\R(U,\bullet)^\alt$. These complexes are also endowed with a partially defined intersection product and we obtain the desired map as the restriction to these complexes. 

We give formulas for the regulator into $P_\calD$ for small simplicial degree.
To see a concrete example, we apply the regulator to a generalization of Totaro's cycle, which leads to dilogarithms. 

We also transfer the construction of the Abel-Jacobi map in \cite{KLM_AbelJacobi} to the regulator into $P_\calD$.

%We replace Jannsen's complex by a quasi-isomorphic (for $\Q$-coefficients) complex $P_\calD$ and obtain a regulator into a partially defined dga that has a strictly graded-commutative product. 
%We compare the two regulators by introducing an evaluation map $ev$ that connects $P_\calD$ with Jannsen's complex $C_\calD$, and show that the diagram formed by $ev$ and the two regulators,
%\[
%    \xymatrix{z^p_\R(U,\bullet) \ar[r] \ar[rd]& P_D^{2p-\bullet} \ar[d]^{ev} \\ & C_\calD^{2p-\bullet}},
%\]
%commutes in cohomology and $ev$ is a quasi-isomorphism there for (at least) $\Q \subset A$. 
%We give formulas for the regulator into $P_\calD$ for small simplicial degree.
%To see a concrete example, we apply the regulator to a generalization of Totaro's cycle, which leads to polylogarithms. 
%
%We also transfer the construction of the Abel-Jacobi map in \cite{KLM_AbelJacobi} to the regulator into $P_\calD$. 

The author thanks Stefan Müller-Stach and Marc Levine for their patience, motivation and support while writing this text. 

\textbf{Convention.} We work in the (complex) analytic case, i.e., all spaces / sheaves are with respect to the analytic topology. 
%; in particular, $\Omega$ denotes the sheaf of complex valued holomorphic differential forms. 

\section{Deligne-Beilinson cohomology}
\label{sec:deligne_cohomology}
Let $A \subset \R$ be a subring of coefficients and $A(p)= (2\pi i)^p A$ its $p$-th Tate twist. Consider a pair $(X,D)$ consisting of a compact manifold $X$ together with a normal crossing divisor $D$ on it. One may think of such a pair as a good compactification of the manifold $U = X \setminus D$. If $j$ denotes the inclusion $U \to X$, then the weight $p$ Deligne-Beilinson cohomology of $U$ is defined \cite{EsnaultViehweg} as the hypercohomology of the complex of sheaves
\begin{align*} 
    \Tot \Big( Rj_* A(p) \oplus F^p \Omega_X^\bullet(\log D) \xrightarrow{-\delta + \iota} \Omega_X^\bullet(\log D) \Big)   
\end{align*}
with the $F^p$ denoting the $p$-th part of the Hodge filtration and $\Omega_X(\log D)$ the shaf of complex valued holomorphic differential forms with at most logarithmic poles along $D$. The maps $\delta$ and $\iota$ are the inclusions. 

One can take this description as a starting point to get more "smooth" definitions of Deligne-Beilinson cohomology. 
Write $\calI(X,D,A(p))$ for the complex of relative integral currents and $\calD(X,\log D)$ for the complex of logarithmic currents (see the appendix \ref{appendix:currents}). 
They have the advantage of being complexes of abelian groups (instead of sheaves) with the same cohomology groups as $Rj_* A(p)$ resp. $\Omega_X(\log D)$ and furthermore are covariant functorial with respect to proper morphisms. 
For a pair $(X,D)$ as above, define
\begin{align} \label{eq:Deligne_Tot_complex_quasiProj}
    C_\calD(X, D, A(p)) := \Tot \left( \calI(X, D,A(p)) \oplus F^p \calD(X,\log D) \xrightarrow{ - \delta + \iota} \calD(X,\log D) \right),
\end{align}
where again $\delta$ and $\iota$ denote the canonical inclusions. 

The Deligne-Beilinson cohomology of a quasi-projective algebraic manifold $U$ over $\C$ with good compactification $X \supset U$ and normal crossing boundary divisor $D = X \setminus U$ is then 
\[
     H_\calD^n(U,A(p)) := H^n C_\calD^\bullet(X, D, A(p)).
\]
This definition is independent of the choice of the compactification \cite[1.13 a)]{Jannsen_DeligneHomology}.

Note that we are actually working with Deligne-Beilinson homology which, by \cite[1.15]{Jannsen_DeligneHomology}, is isomorphic to Deligne-Beilinson cohomology. The use of homological complexes to compute cohomology is analogous to the traditional use of higher Chow chains to compute motivic cohomology. In this way, we will get a map from cycles to cycle. Moreover the cohomological notation sometimes simplifies the indices (e.g., for the intersection product).

There is another description of Deligne-Beilinson cohomology that uses an interpretation of the above complex \eqref{eq:Deligne_Tot_complex_quasiProj} as a homotopy limit of the diagram $\calI \oplus F^p\calD \to \calD$. This homotopy limit can also be described\footnote{For example by applying the general procedure of Hinich/Schechtman \cite[Theorem 4.1]{HinichSchechtman} of Thom-Sullivan cochains to the above diagram (considered as a simplicial object). The quasi-isomorphism given therein specializes to the comparison-isomorphism in \ref{subsec:comparison_of_regulators}.} by paths in $\calD(X,\log D)$ connecting $\calI$ and $F^p\calD$. 
Formally, it is a subcomplex
\[ 
     P(X,D,A(p))  \subset \Omega_A(x) \otimes_A \calD(X, \log D)
\] 
where $\Omega_A(x)$ is the dga of $A$-valued polynomial differential forms on the $1$-simplex, i.e., the free graded-commutative differential graded $A$-algebra generated by a variable $x$ in degree $0$.
It is defined as
\[
    P^\bullet(X, D,A(p)) := \left\{ P \in \Omega(x) \otimes \calD(X, \log D) \text{ such that } 
    \genfrac{}{}{0pt}{}{ P_0 \in \calI(X, D, A(p)),} { P_1 \in F^p \calD(X, \log D) } \right\}
\]
where $P \mapsto P_\epsilon$ is the unique morphism that sends $x \mapsto \epsilon$, $dx \mapsto 0$ and is the identity on $\calD$. 
%It is freely generated by   
%\[
%    \left< 
%       w \otimes T \, \middle| \, 
%       \begin{aligned} w(0) T &\in \calI(X,D,A(p)),      \\ 
%                       w(1) T &\in F^p \calD(X, \log D) 
%       \end{aligned}   
%    \right> 
%\]
This complex naturally carries the structure of a graded commutative (partially defined) dg algebra, inherited from the tensor product of dg algebras $\Omega_A(x) \otimes_A \calD(X, \log D)$. It is only partially defined, because the intersection product on $\calD(X,\log D)$ is only partially defined. 

In case that $A \supset \Q$, this complex is quasi-isomorphic to $C_\calD$, see \ref{subsec:comparison_of_regulators}.

\section{Motivic cohomology} 
Motivic cohomology is a bigraded cohomology theory for algebraic varieties whose existence was conjectured by Grothendieck. 
There are several approaches to construct motivic cohomology. One very explicit variant uses the "higher Chow groups" introduced by Bloch \cite{BlochAlgebraicCycles}.

In their cubical incarnation they are defined as the homotopy groups of some cubical abelian groups $c^p(U, *)$ of so-called admissible chains of codimension $p$. An admissible chain is an algebraic cycle in $U \times \cube^n$, where $\cube^n = (\P^1 \setminus \{1\})^n$ is the algebraic $n$-cube, that meet all faces of the cube properly. Such a face is by definition the subvariety obtained by setting one (or more) coordinates to be $0$ or $\infty$. Degenerate cycles are defined as the pullbacks of admissible chains along one of the various projections $\pr_i$ which forget the $i$-th coordinate.

Instead of defining higher Chow groups as the homotopy groups of this cubical object, one can equivalently define them as the homology groups $\CH^p(U,n) = H_n z^p(U,-)$ of the associated normalized complex $z^p(U,-)$ which by definition is the complex 
%\[ 
%    z^p(U,n) = \left< Z \subset U \times \cube^n \text{ codimension p alg cycle that meet the faces properly } \right> / \text{ degenerate cycles }
%\]
\[ 
    z^p(U,n) = \left< \
               \parbox{0.4 \linewidth}{ $Z \subset U \times \cube^n$ codimension $p$ algebraic \\ cycle that meet all faces properly } 
               \right> _{\Big/  \text{ \normalsize degenerate cycles }}
\]

with differential given through intersection with the codimension $1$ faces, 
\[\partial = \sum_{i=1}^n (-1)^{i+1} \Big( (\partial_{i,0})^* - (\partial_{i,\infty})^* \Big).\]
Here $\partial_{i,\epsilon}$ denotes the map that insert $\epsilon \in \{0,\infty\}$ in position $i$.

Motivic cohomology of an algebraic manifold can actually be defined using higher Chow groups through (for Voevodsky's more modern definition and the equivalence with this one, see \cite[4.2.9]{Voevodsky_TriangulatedCategoriesOfMotivesOverAField})% \cite[p. 508]{Levine_Handbook})
\[
    H^{2p-n}_M(U,\Z(p)) = CH^p(U,\Z(n)). 
\]
Motivic cohomology can also be computed by a (sub)complex consisting of higher Chow chains which have proper intersection with some real subsets $\Rm = [-\infty , 0] \subset \cube$. 
Namely, denote by $c_\R^p(U,n)$ the free abelian group of admissible chains that meet (as real analytic chains) $U \times (T_{i_1} \cap \ldots \cap T_{i_j})$ properly (for $T_{i} = \{ z \in \cube^n : z_i \in \Rm \}$).
In the same manner as before define degenerate cycles and the quotient complex $z_\R^p(U,\bullet)$. 
This complex will be the natural domain for the regulator map. 
Kerr/Lewis \cite[Lemma 8.14]{KerrLewis_AbelJacobi_II} showed that the inclusion
\[
 z^p_\R(U,\bullet)_\Q \to z^p(U,\bullet)_\Q
\]
is a quasi-isomorphism.
%Its inclusion into $z^p(X,-)$ is known to be a quasi-isomorphism, as is shown e.g. by Kerr/Lewis \cite[Lemma 8.14]{KerrLewis_AbelJacobi_II}.
%
%Finally, there is a complex $z_\qfin^p(X,\bullet)$ of quasi-finite cycles, 
%\[
%   z_\qfin^p(X,n) = \left< Z \subset X \times (\A^1)^p \times \cube^n \mid \text{ The projection } Z \to X \times \cube^n \text{ is quasi-finite } \right>
%\]
%with differential induced by the same formula as above. Strictly speaking, it doesn't compute motivic cohomology for all possible $X$ (the associated Nisnevich sheaf however does) but for $X$ the spectrum of a field, it is quasi-isomorphic to $z^p(X,\bullet)$.

\paragraph*{Products and alternating cycles}

The higher Chow groups admit an intersection product that is induced by a partially defined product on the underlying chain complex \cite[p. 452f]{Levine_Handbook}. 
%Starting with two higher Chow chains, it forms their exterior power in $U \times \cube^n \times U \times \cube^m$ and pulls the result back to $U \times \cube^{n+m}$ along the map induced by the diagonal of $U$. 
Starting with two admissible chains, it forms their exterior power in $U \times \cube^n \times U \times \cube^m$ and pulls the result back to $U \times \cube^{n+m}$ along the map induced by the diagonal of $U$. This product is only partially defined, because the pullback may not exists. The intersection with a degenerate cycle is again a degenerate cycle and thus one gets a partially defined product on 
\[ \bigoplus_p z^p(U,\bullet) \] with unit the class of $U$ in $z^0(U,0)$, see also \cite[14,28]{Levine_Handbook}. 
 
This product is associative on the level of chains but is in general not graded-commutative. It however induces a graded-commutative product on cohomology. 
When passing to rational coefficients, one can force the product to be graded-commutative (on chains) by using alternating cycles \cite[29]{Levine_Handbook}.
%To define this consider the (unambiguous) semi-direct product $G_{n,p} := (\Z/2)^n \rtimes S_n \times S_p$. This group acts on $X \times (\A^1)^p \times \cube^n $ and hence on algebraic cycles therein as follows. Each factor $\Z/2$ acts by inverting the coordinate of the respective $\cube$, that is by $z \mapsto - \frac{1}{z}$. The group $S_n$ acts on $\cube^n$ by permutation of the coordinates (with appropriate signs), and $S_p$ acts on $(\A^1)^p$ by permuting coordinates (but without any signs). 
To define them, consider the action of the symmetric group $S_n$ on $U \times \cube^n$ by permutation of the coordinates (with appropriate signs).
This gives an action of $S_n$ on the set of algebraic cycles in $U \times \cube^n$ that preserves proper intersection with (real) faces and sends degenerate cycles to degenerate cycles. Thus it restricts to actions on $z^p(U,n)$ and $z_\R^p(U,n)$.
%This action of $G_{n,p}$ restricts to actions on $z^p(X,n)$ and $z_\qfin^p(X,n)$ respectively.
The alternating higher Chow chains are defined to be the subspace of $S_n$-invariant higher Chow chains
\[
    z^p(U,n)^{\alt} := z^p(U,n)^{S_n}.
\]
The differential $\partial$ restricts to these subspaces and makes them into a complex (with respect to the variable $n$). The projector associated to the $S_{n}$-action,
\[
    \alt : Z \mapsto \frac{1}{n!} \sum_{g \in S_{n}} g \cdot Z
\]
commutes with the differential, i.e., $\alt \circ \partial = \partial  \circ  \alt$. In particular, $\alt$ is a morphism of complexes from higher Chow chains to invariant chains with rational coefficients. 
%It is a quasi-isomorphism \cite[Lemma 29]{Levine_Handbook} when passing to rational coefficients on both sides.
Similarly, one defines $z^p_\R(U,n)^{\alt}$.

%The complexes of alternating cycles give another description of the rational motivic cohomology. 
The complexes of alternating cycles with rational coefficients inherit a product from the higher Chow complex: It is the usual product on Chow chains followed by the $\alt$ projection.
By construction, this product is strict graded-commutative with respect to $n$. The projector $\alt$ induces a map of partially defined differential graded algebras
%\[
%    \bigoplus \limits_{p,n} z^p(U,n)_\Q \to \bigoplus_{p,n} z^p(U,n)^{\alt}_\Q.
%\]
\[
    \bigoplus \limits_{p} z^p(U,\bullet)_\Q \to \bigoplus_{p} z^p(U,\bullet)^{\alt}_\Q.
\]
The situation is summarized by the 
\begin{proposition}
The diagram below formed by the alternating projection (horizontal) and inclusion (vertical) is a commutative diagram of partially defined dg algebras and each arrow is a quasi-isomorphism.
%\[
%\xymatrix{
%  \bigoplus \limits_{p} z^p(U,\bullet)_\Q \ar[r]            &  \bigoplus \limits_{p} z^p(U,\bullet)_\Q^{\alt}    \\
%  \bigoplus \limits_{p} z^p_\R(U,\bullet)_\Q \ar[u] \ar[r]  &  \bigoplus \limits_{p} z^p_\R(U,\bullet)_\Q^{\alt} \ar[u]
%}
%\]
\[
\xymatrix{
  \bigoplus \limits_{p,n} z^p(U,n)_\Q \ar[r]            &  \bigoplus \limits_{p,n} z^p(U,n)_\Q^{\alt}    \\
  \bigoplus \limits_{p,n} z^p_\R(U,n)_\Q \ar[u] \ar[r]  &  \bigoplus \limits_{p,n} z^p_\R(U,n)_\Q^{\alt} \ar[u]
}
\]
%The diagram formed by the alternating projection (horizontal) and inclusion (vertical) below is a commutative diagram of partially defined dg algebras
%\[
%\xymatrix{
%  \bigoplus \limits_{p,n} z^p(U,n)_\Q \ar[r]            &  \bigoplus \limits_{p,n} z^p(U,n)_\Q^{\Alt}    \\
%  \bigoplus \limits_{p,n} z^p_\R(U,n)_\Q \ar[u] \ar[r]  &  \bigoplus \limits_{p,n} z^p_\R(U,n)_\Q^{\Alt} \ar[u]
%}
%\]
\end{proposition}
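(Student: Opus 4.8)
The plan is to separate the formal content of the statement --- commutativity of the square and compatibility with the partially defined products --- from its analytic content, and then to derive ``all four arrows are quasi-isomorphisms'' from only two external inputs: the Kerr--Lewis quasi-isomorphism $z^p_\R(U,\bullet)_\Q \to z^p(U,\bullet)_\Q$ and the fact that rational alternating cubical cycles compute motivic cohomology. For the formal part one observes that the $S_n$-action on cycles in $U\times\cube^n$ preserves admissibility, preserves real-admissibility (proper intersection with the $T_{i_1}\cap\cdots\cap T_{i_j}$), and carries degenerate cycles to degenerate cycles, as already recorded above; hence the inclusions $z^p_\R(U,\bullet)\hookrightarrow z^p(U,\bullet)$ are $S_n$-equivariant and therefore commute with the projectors $\alt$, which is exactly commutativity of the square. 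The horizontal arrows are morphisms of partially defined dg algebras by the discussion preceding the proposition; the vertical inclusions are as well, once one takes the product on $z^p_\R(U,\bullet)$ (respectively $z^p_\R(U,\bullet)^{\alt}$) to be the restriction of the product on $z^p(U,\bullet)$ (respectively $z^p(U,\bullet)^{\alt}$), declared defined exactly when the pullback along the diagonal exists in $z^p$ and the resulting cycle is again real-admissible, so that compatibility holds by construction.

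Next I would pin down two of the four arrows directly. The left vertical arrow is a quasi-isomorphism by \cite[Lemma 8.14]{KerrLewis_AbelJacobi_II}. For the top arrow, recall from \cite[29]{Levine_Handbook} that the inclusion $j\colon z^p(U,\bullet)^{\alt}_\Q\hookrightarrow z^p(U,\bullet)_\Q$ is a quasi-isomorphism. Over $\Q$ the averaging operator $\alt$ is an idempotent chain map with image $z^p(U,\bullet)^{\alt}_\Q$, so its corestriction $\alt\colon z^p(U,\bullet)_\Q\to z^p(U,\bullet)^{\alt}_\Q$ satisfies $\alt\circ j=\id$; hence on homology $H(\alt)$ is inverse to the isomorphism $H(j)$ and is itself an isomorphism. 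In particular the $S_n$-action on $H(z^p(U,\bullet)_\Q)=\CH^p(U,\bullet)_\Q$ is trivial.

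For the two remaining arrows I would use that an idempotent chain endomorphism of a complex induces an idempotent on homology, so that for any complex $M$ of $\Q[S_n]$-modules one has $H(M^{S_n})=H(M)^{S_n}$ and passing to $S_n$-invariants preserves quasi-isomorphisms. Applied to the $S_n$-equivariant Kerr--Lewis quasi-isomorphism, this shows the right vertical arrow $z^p_\R(U,\bullet)^{\alt}_\Q\to z^p(U,\bullet)^{\alt}_\Q$ is a quasi-isomorphism, and transporting triviality of the action across it shows $S_n$ acts trivially on $H(z^p_\R(U,\bullet)_\Q)$ as well. Consequently the bottom arrow, which on homology is the projection $H(z^p_\R(U,\bullet)_\Q)\to H(z^p_\R(U,\bullet)_\Q)^{S_n}$, is an isomorphism. (Equivalently, the bottom arrow is a quasi-isomorphism by the two-out-of-three property once the other three are.)

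The one point where care is needed is the partially defined product, namely checking that $z^p_\R(U,\bullet)\hookrightarrow z^p(U,\bullet)$ respects it; I would fold this into the definition of the product on the real-admissible subcomplexes, as above, rather than verify it after the fact. Apart from that, all the genuine difficulty is external and cited --- the moving-lemma arguments behind \cite[Lemma 8.14]{KerrLewis_AbelJacobi_II} and behind the statement that alternating cycles compute rational motivic cohomology --- and what is left is routine homological algebra with the idempotent $\alt$.
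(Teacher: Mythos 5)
Your proposal is correct, and it rests on the same two external inputs as the paper (Kerr--Lewis \cite[Lemma 8.14]{KerrLewis_AbelJacobi_II} and Levine's result that alternating chains compute rational higher Chow groups), but it distributes the work over the four arrows differently. The paper proves \emph{both} horizontal arrows directly, by invoking the general theory of extended cubical objects \cite[Proposition 1.6]{Levine_SmoothMotives} for $z^p$ and for $z^p_\R$ alike, takes the left vertical from Kerr--Lewis, and then gets the right vertical by the two-out-of-three argument; the product compatibility of $\alt$ is reduced to the identity $\alt(\alt(Z)\cdot\alt(Z'))=\alt(Z\cdot Z')$, checked by hand. You instead prove only the top horizontal arrow from Levine (via $\alt\circ j=\id$, exactly the paper's quasi-inverse remark), obtain the right vertical \emph{directly} by applying the exactness of the rational idempotent $\alt$ (equivalently, the direct-sum decomposition it induces) to the $S_n$-compatible Kerr--Lewis inclusion, and then close the bottom arrow by two-out-of-three. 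What your route buys is that Levine's cubical-object machinery never has to be verified for the real-admissible complex $z^p_\R$; that arrow comes out formally. Two minor caveats: your intermediate claim that ``$S_n$ acts trivially on $H(z^p(U,\bullet)_\Q)$'' is not literally well posed, since individual permutations are not chain maps (only the projector $\alt$ commutes with $\partial$), so the clean formulation is the idempotent/direct-summand one you also give, together with your parenthetical two-out-of-three fallback, which makes the loose phrasing harmless; and your treatment of the dg-algebra compatibility of the horizontal arrows leans on the assertion preceding the proposition, whereas the paper's proof is where that assertion is actually discharged (by the displayed identity for $\alt$), so strictly speaking you should still record that verification rather than cite it.
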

\begin{proof}
Commutativity of the diagram is obvious. That the horizontal arrows are quasi-iso\-morphisms follows from general theory of (extended) cubical objects as in \cite[Proposition 1.6]{Levine_SmoothMotives} (there it is proved that the inclusion of alternating chains into chains is a quasi-isomorphism. But then $\alt$ is necessarily a quasi-inverse.). For the upper arrow this is also in \cite[Lemma 29]{Levine_Handbook}.
That the vertical arrows are quasi-isomorphisms, is content of the moving lemma. The left arrow is proven by Kerr/Lewis in \cite[Lemma 8.14]{KerrLewis_AbelJacobi_II} and for the right arrow it follows from the commutativity of the diagram and the quasi-isomorphicity of the other three arrows.

The statement that all maps are compatible with the products is clear for the inclusions. For the alternating projections, it is equivalent to $\alt(\alt(Z) \cdot \alt(Z')) = \alt(Z \cdot Z')$. This can be verified by hand.
\end{proof}
%In exactly the same way one proceeds for alternating quasi-finite cycles, obtaining an associative, graded-commutative (external and internal) product. The inclusion $z_\qfin(X,n)^{G_{n,p}} \to z(X \times (\A^1)^p,n)^{G_{n,0}}$ however is compatible with the external product if and only if $X$ is a point.

\section{An abstract regulator} \label{sec:abstract_regulator}
Assume that for each weight $p$ there is given a functorial collection of complexes of abelian groups $C_\calD^\bullet(X,D,\Z(p))$ for each pair of a projective algebraic manifold $X$ and a normal crossing divisor $D \subset X$. Think of these complexes as "Deligne complexes", whose cohomology calculate the Deligne-Beilinson cohomology of $U := X \setminus D$.
This collection should be covariant functorial for proper morphisms.
Assume that there exist cycle maps $\cl$, which to a codimension $p$ algebraic cycle in $U$ associate an element in $C_\calD^{2p}(X,D, \Z(p))$. 
We want to define higher cycle maps, or regulator maps, from higher Chow cycles to these Deligne complexes,
\[
     r_\calD : z^p(U,n) \to C_\calD^{2p-n}(X,D,\Z(p)).
\]
These maps should have some good properties. For example they should recover the cycle maps $\cl$ for $n=0$. The regulators for varying $n$ should be compatible, i.e., they should give rise to a morphism of complexes (suitably shifted). 
We are (in fact this was the starting point of this construction) interested in the multiplicative behavior of this map. Therefore we assume that on $\oplus_p C_\calD(X,D,\Z(p))$ there exist an exterior product $\boxtimes$ which is additive in both the weight $p$ and the degree, and which furthermore is unitary and associative in the sense of exterior products. Furthermore we assume that there exists a reasonable intersection product on these complexes. All these structures should be compatible in a sense which is made precise in the appendix \ref{appendix:requirements_on_D_complexes}.

%We also need the condition that for a projection $\pr : X \times Y \to X$ and a cycle $Z$ on $X$, one has $\pr_* (\cl(Z \times \pt)) = \pr_* \cl(Z)$.

To define the regulator map $r_\calD$ one may proceed as follows. 
First, compactify the cube to $\cubeBar = \P_1$ with marked point $\{1\}$ as boundary divisor. 
The cycle map gives for each higher chain $Z$ of codimension $p$ an element 
\[
     \cl( Z) \in C^{2p}(X \times \cubeBar^n, D \boxtimes \mathbbm{1}^n, \Z(p) ),
\]
where $\mathbbm{1}^n$ denotes the union of the various divisors $\{ z_i = 1 \}$ and $\boxtimes$ denotes the outer product of divisors (not to confuse with the exterior product in the complex $C_\calD$).
Then choose an element $R^1 \in C_\calD^1(\cubeBar, 1, \Z(1))$ and form the exterior products $R^n \in C^n_\calD(\cubeBar^n, \mathbbm{1}^n, \Z(n))$. 
Note that $R^0 \in C^0_\calD(\pt,\emptyset, \Z)$ is just the unit for the external product. 
%The product with the cycle associated to $X$ then results in 
%\[
%     \cl(X) \times R^n \in C^{n}(X \times \cubeBar^n, \Z(n)).
%\]
The value of a cycle $Z$ under the map $r_\calD$ is then defined as 
%\[
%    r_\calD(Z) := (2 \pi i)^{-n} (\pr_X)_* \Big( (\pr_{\cube^n}^{X \times \cube^n})^* (R^n) \cap \cl(Z) \Big) 
%\]
\[
    r_\calD(Z) :=  (\pr_X)_* \Big( (\cl(U) \boxtimes R^n) \cap \cl( Z ) \Big) 
\]
whenever the intersection product is defined. 
Remark that the push forward is always defined since the projection $(X \times \cubeBar^n, D \boxtimes \mathbbm{1}^n ) \to (X , D)$ has compact fibres, hence is proper. Thus the only obstruction to definedness is the non-existence of the intersection of $\cl(U) \boxtimes R^n$ with $\cl(Z)$.

To check well-definedness on higher Chow cycles, let $Z = \exclude_j^* \tilde Z$ be a degenerate cycle, obtained as the pullback of an admissible chain along the map that forgets the $j$th cube. It is to show that the regulator $r_\calD(Z)$ is zero. This follows from the projection formula if one knows that $(\exclude_j)_* (\cl(U) \boxtimes R^n)$ vanishes. But this is true for dimension reasons. Indeed, it is enough to show that $(\pr_\pt)_* R^1$ vanishes. But if it exists, this push forward would have dimension $-1$, hence must be zero. 
%But this is true since the fibre integral vanishes for dimension reasons.

Consequently, the regulator map descends to a partially defined map on higher Chow groups. This "abstract regulator map" serves as the prototype of a regulator. It becomes concrete after fixing a choice of the required data (the complexes $C_\calD(X,D,\Z(p))$ together with products and the element $R^1$).
%\footnote{For an argument why the abstract regulator should have this form, see appendix \ref{sec:why_the_regulator_should_have_this_form}.}
This is done in later sections. First, some compatibility conditions are exhibited.

\paragraph*{Compatibility with differential}
We now assume that $dR^1 = \cl((z))$ is the cycle associated to the divisor $(z) = 0 - \infty$. 
Then the regulator transforms the Bloch differential on higher Chow groups to the differential in $C_\calD$. 
To see this, we apply the differential to the regulator of a higher Chow chain $Z \subset U \times \cube^n$. 
For clarity, we abbreviate $\pr = \pr_X$ for any projection $X \times \cubeBar^? \to X$.% and similar $\pr^*$ for any pullback along some $X \times \cubeBar^? \to \cubeBar^?$. 
%Furthermore, we write $\cl(X)$ for $\cl(X \setminus D)$, i.e. we identify $\cl(X)$ with the image of $\cl(X \setminus D)$ under the map induced by $(X,0) \to (X,D)$.
We have to show that 
\[
      d\pr_* \big( (\cl(U) \boxtimes R^n) \cap \cl(Z) \big) = \pr_* \big( (\cl(U) \boxtimes  R^{n-1}) \cap \cl(\partial Z) \big).
\]
Since the differential commutes with direct images and using the product rule for $\boxtimes$ and $\cap$ (together with $d \cl(Z) = 0$) one gets
\begin{align*}
  d\pr_* \big( (\cl(U) \boxtimes R^n) \cap \cl(Z) \big) 
     &= \pr_* \big( (\cl(U) \boxtimes d R^n) \cap \cl(Z) \big) \\
     &= \pr_* \sum_{i=1}^n (-1)^{i+1} \left( \cl(U) \boxtimes R^{i-1} \boxtimes d R^1 \boxtimes R^{n-i} \right)  \cap \cl(Z).  \\
\intertext{%
Note that by asumption $d R^1 = \cl((z))$ is the difference of the classes of the two points $0$ and $\infty$.
Thus it can be interpreted as the push forward of these points under the inclusion into $\cubeBar$. 
Then using the projection formula one gets}
     &= \pr_* \sum_{i=1}^n (-1)^{i+1} (\partial_{i,0} - \partial_{i,\infty})_* \left( \cl(U) \boxtimes R^{n-1} \right) \cap \cl \left( Z \right)  \\
     &= \pr_* \sum_{i=1}^n (-1)^{i+1} ( \cl(U) \boxtimes R^{n-1} ) \cap (\partial_{i,0}^{*} -\partial_{i,\infty}^*) \cl \left( Z \right) \\
     &= \pr_* \sum_{i=1}^n (-1)^{i+1} ( \cl(U) \boxtimes R^{n-1} ) \cap \cl \left( (\partial_{i,0}^{*} -\partial_{i,\infty}^*) Z \right) \\
     &= \pr_* \left( ( \cl(U) \boxtimes R^{n-1}) \cap \cl(\partial Z) \right)
\end{align*}
%\begin{align*}
%  d\pr_* ( \cl(X) \times R^n \cap \cl(Z) ) 
%     &= \pr_* ( \cl(X) \times d R^n \cap \cl(Z) ) \\
%     &= \pr_* \sum_{i=1}^n (-1)^{i+1} \left( \cl(X) {\times} R^{i-1} {\times} d R^1 {\times} R^{n-i} \right)  \cap \cl(Z)  \\
%     &= \pr_* \sum_{i=1}^n (-1)^{i+1} \left( \cl(X) {\times} R^{i-1} {\times} \cl((z)) {\times} R^{n-i} \right)  \cap \cl(Z) 
%\intertext{%
%by the assumption. Note that $\cl((z))$ is the difference of the classes of the two points $0$ and $\infty$. By the projection formula one can identify the intersection $\cl((z_i)) \cap \cl(Z)$ with the pullback of $\cl(Z)$ along $\partial_{i,0} - \partial_{i,\infty}$, see also the appendix. Thus the above becomes }
%     &= \pr_* \sum_{i=1}^n (-1)^{i+1} ( \cl(X) {\times} R^{n-1} ) \cap \cl \left( (\partial_{i,0}^{*} -\partial_{i,\infty}^*) Z \right) \\
%     &= \pr_* \left( ( \cl(X) {\times} R^{n-1}) \cap \cl(\partial Z) \right)
%\end{align*}
by definition of the differential on Bloch's complex. 
 
We remark that by additivity of the intersection product, the first equality in the preceding calculation says that changing $R^n$ by a boundary changes the resulting regulator only by a boundary.

\paragraph*{Compatibility with products}
Next we show compatibility of the abstract regulator map with the product structures on higher Chow groups and on the complexes $C_\calD$. It suffices to consider integral coefficients; Then the result for arbitrary coefficients and alternating cycles follows from the linearity of the regulator. 
%
%The compatibility is shown twice on different levels of detail, starting with a vague version.
%
%Let $Z \in z^p(U,n)$ and $Z' \in z^q(U,m)$ be two higher Chow cycles that both lie in the domain of the regulator and meet transversally. Their intersection product is given by the pullback $\Delta^* ( Z \times Z')$ along the diagonal\footnote{More precisely, $\Delta = \text{switching map} \circ (\Delta_X \times \id_{\cube^{n+n'}})$.} of $U$.
%One gets
%\begin{align*}
%  r_\calD( Z \cdot Z') &= \pr_* ( \pr^* R^{n+m} \cap \Delta^* \cl(Z \times Z') ) \\
%                       &= \pr_* \Delta^* ( \pr^* R^{n+m} \cap \cl(Z \times Z') ) \\
%                       &= \pr_* \Delta^* ( (\pr^* R^n \cap \cl(Z)) \times (\pr^* R^{m} \cap \cl(Z')) ) \\
%                       &=  \Delta^* ( r_\calD(Z) \times r_\calD(Z') ).
%\end{align*} 

%Now we make the above proof more precise. 
Let $Z \in z^p(U,n)$ and $Z' \in z^q(U,m)$ be two higher Chow cycles that both lie in the domain of the regulator and meet transversally. 
Write $\Delta^{n,m}_X$ for the map $X \times \cube^{n+m} \to X \times \cube^n \times X \times \cube^m$ induced by the diagonal, and likewise $\Delta^{n,m}_U$. Then the intersection product of the two cycles is given by the pullback $Z \cdot Z' = (\Delta^{n,m}_U)^* ( Z \times Z')$. 

It suffices to show that $\Delta_* \big( r_\calD(Z) \cap r_\calD(Z') \big) = \Delta_* r_\calD(Z \cdot Z')$, since the diagonal is a closed embedding and its push-forward then injective. 
The right hand side is 
%Write $\Delta^{n,m}_X$ for the map $X \times \cube^{n+m} \to X \times \cube^n \times X \times \cube^m$ induced by the diagonal. 
%Then
%
%It suffices to show that $\Delta_* \big( r_\calD(Z) \cap r_\calD(Z') \big) = \Delta_* r_\calD(Z \cdot Z')$, since the diagonal is a closed embedding and its push-forward then injective. 
%Write $\Delta^{n,m}_X$ for the map $X \times \cube^{n+m} \to X \times \cube^n \times X \times \cube^m$ induced by the diagonal. 
%Then
\begin{align*}
  (\Delta_X)_* r_\calD( Z \cdot Z') &= (\Delta_X)_* (\pr_X)_* \Big( (\cl(U) \boxtimes R^{n+m}) \cap \cl(Z \cdot Z') \Big) \\ 
       &= (\pr_{X \times X})_* (\Delta^{n,m}_X)_* \Big( (\cl(U) \boxtimes R^{n+m}) \cap \cl(Z \cdot Z') \Big) \\
       &= (\pr_{X \times X})_* (\Delta^{n,m}_X)_* \Big( (\cl(U) \boxtimes R^{n+m}) \cap (\Delta_X^{n,m})^* \left[ \cl(Z) \times \cl(Z') \right] \Big) \\       
       &= (\pr_{X \times X})_* \Big( (\Delta^{n,m}_X)_* (\cl(U) \boxtimes R^{n+m}) \cap \cl(Z \times Z') \Big).       
\end{align*}
On the other side, for $\Delta_U$ the diagonal in $U \times U$, using reduction to the diagonal, the compatibility of $\boxtimes$ with push forward, $\cap$ and $\cl$, and the projection formula,  
\begin{align*}
   (\Delta_X)_* \left[ r_\calD(Z) \cap r_\calD(Z') \right] 
      &= \Big[ r_\calD(Z) \boxtimes r_\calD(Z') \Big] \cap \cl(\Delta_U) \\
      &= (\pr_{X \times X})_* \Big[ \big( (\cl(U) \boxtimes R^n ) \cap \cl(Z) \big) \boxtimes \big( (\cl(U) \boxtimes R^m ) \cap \cl(Z') \big) \Big] \cap \cl(\Delta_U) \\
      &= (\pr_{X \times X})_* \Big[ (\cl(U) \boxtimes R^n \times \cl(U) \times R^m) \cap (\cl(Z) \times \cl(Z')) \Big] \cap \cl(\Delta_U) \\      
      &= (\pr_{X \times X})_* \Big[ (\cl(U) \boxtimes R^n \times \cl(U) \times R^m) \cap \cl(Z \times Z') \cap (\pr_{X \times X})^* \cl(\Delta_U) \Big].     
\end{align*}
That this agrees with the expression above is easily verified as follows. Denote by $\tau$ the map that exchanges the two middle factors. Then use that intersection is compatible with biholomorphisms, the compatibility of $\cap, \boxtimes$ and that the cycle class $\cl(U \times U)$ is the identity for the intersection product to obtain
%\[
%  \left( \cl(U) \times R^n \times \cl(U) \times R^m \right) \cap (\pr_{X \times X})^*\cl(\Delta_U) = (\Delta_X^{n,m})_* (\cl(U) \times R^{n+m}).
%\]
%$\tau_{23}$ = switching map. 
\begin{align*}
  &\Big( \cl(U) \boxtimes R^n \boxtimes \cl(U) \boxtimes R^m \Big) \cap (\pr_{X \times X})^*\cl(\Delta_U) \\
  = &\Big( \cl(U) \boxtimes R^n \boxtimes \cl(U) \boxtimes R^m \Big) \cap \tau_* \Big( \cl(\Delta_U) \boxtimes \cl(\cube^{n+m}) \Big) \\
  = & \tau_* \Big[ \Big( \cl(U) \boxtimes \cl(U) \boxtimes R^n \boxtimes R^m \Big) \cap \Big( \cl(\Delta_U) \boxtimes \cl(\cube^{n+m}) \Big) \Big] \\  
  = & \tau_* \Big[ \cl(\Delta_U) \boxtimes R^n \boxtimes R^m \Big] \\    
  = & (\Delta_X^{n,m})_* (\cl(U) \boxtimes R^{n+m}).
\end{align*}
%The result follows\footnote{The first equality is proved in the appendix. A strict proof of the second statement is missing (but should be easy).} from $\left(\pr_{X \times X}^{X \times \cubeBar^n \times X \times \cubeBar^m} \right)^*  \cl(\Delta_U) = (\Delta^{n,m}_X)_* (\cl(U) \times \cubeBar^{n+m})$ and that the intersection of the latter chain with $X\times R^n \times X \times R^m$ is exactly $(\Delta^{n,m}_X)_*( \cl(U) \times R^{n+m})$.

Another way to phrase the above compatibilities of the regulator map is that the regulator induces a morphism between (suitable re-graded), partially defined differential graded algebras. Indeed, setting $\calN^r(U,p) := z^p(U, 2p-r)$, the regulator becomes a (partially defined) degree preserving map of complexes
\[
    r_\calD : \bigoplus \limits_p \calN^\bullet (U,p) \to \bigoplus \limits_p C_\calD^\bullet(X,D,\Z(p)).
\]
Compatibility with products implies that this is a map of dg algebras.

\paragraph{Functoriality}

Assume there is given a smooth morphism $f : U \to U'$ of algebraic manifolds induced by a smooth morphism of pairs $\overline f : (X,D) \to (X',D')$ of relative dimension $\delta$. 
The map $\overline f \times \id : X \times \cubeBar^n \to X' \times \cubeBar^n$ is proper, hence induces a push forward on the Deligne complexes (by functoriality).
If moreover $f$ is proper, then $f \times \id$ induces a push forward on higher Chow chains.
% and similarly $\overline f \times \id$ induces a push forward on the Deligne complexes (by functoriality).
%Then by functoriality $f \times \id_{\cube^n}$ and $\overline f \times \id_{\cubeBar^n}$ induce push forward morphisms on higher Chow chains and on the Deligne complexes respectively. 
\begin{lemma} For $f, \overline f$ proper, the regulator map is compatible with these push forwards, i.e., 
\[ 
\vcenter{
\xymatrix@C=4em{
     z^p(U,n) \ar[r]^{f_*} \ar[d]^{r_\calD}  &  z^{p-\delta}(U',n) \ar[d]^{r_\calD}  \\
     C_\calD^{2p-n}(X,D,\Z(p)) \ar[r]^{\overline{f}_*}  &  C_\calD^{2p-2\delta-n}(X',D',\Z(p-\delta))
}}
\text{ commutes.}
\]
\end{lemma}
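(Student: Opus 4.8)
The plan is to deduce the lemma from a short diagram chase, using only compatibilities of $\cl$, $\boxtimes$, $\cap$, push-forward and flat pullback that are part of the list of requirements on the complexes $C_\calD$ in appendix~\ref{appendix:requirements_on_D_complexes}: that $\cl$ commutes with proper push-forward and with flat pullback, that $\boxtimes$ is compatible with both, and the projection formula relating $\cap$ and proper push-forward. The geometric input is that $\overline f$ smooth makes $\overline f \times \id$ a proper flat morphism of pairs $(X \times \cubeBar^n, D \boxtimes \mathbbm{1}^n) \to (X' \times \cubeBar^n, D' \boxtimes \mathbbm{1}^n)$ which restricts to $f \times \id$ over the open parts and is the identity on the $\cubeBar^n$-factor.

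First I would start from the right-hand side $r_\calD(f_* Z)$. Compatibility of the cycle map with proper push-forward gives $\cl(f_* Z) = (\overline f \times \id)_* \cl(Z)$, so
\[ r_\calD(f_* Z) = (\pr_{X'})_* \Big( (\cl(U') \boxtimes R^n) \cap (\overline f \times \id)_* \cl(Z) \Big). \]
Since $\overline f$ is flat there is a flat pullback $\overline f^*$ on the Deligne complexes, and $\overline f^* \cl(U') = \cl(U)$ (the flat pullback of the fundamental cycle, equivalently of the unit for $\cap$); together with compatibility of $\boxtimes$ with pullback and $\id^* R^n = R^n$ this gives $(\overline f \times \id)^*(\cl(U') \boxtimes R^n) = \cl(U) \boxtimes R^n$. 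The projection formula for the proper map $\overline f \times \id$ then yields
\[ (\cl(U') \boxtimes R^n) \cap (\overline f \times \id)_* \cl(Z) = (\overline f \times \id)_* \Big( (\cl(U) \boxtimes R^n) \cap \cl(Z) \Big), \]
understood in the partially defined sense (one side is defined iff the other is, which is exactly how the projection formula is stated for the partial product). Finally, $\pr_{X'} \circ (\overline f \times \id) = \overline f \circ \pr_X$ and functoriality of push-forward give
\[ r_\calD(f_* Z) = \overline f_* (\pr_X)_* \Big( (\cl(U) \boxtimes R^n) \cap \cl(Z) \Big) = \overline f_*\, r_\calD(Z). \]
The degrees and weights match automatically: $\overline f_*$ drops cohomological degree by $2\delta$ and weight by $\delta$, which is precisely the shift from $C_\calD^{2p-n}(X,D,\Z(p))$ to $C_\calD^{2(p-\delta)-n}(X',D',\Z(p-\delta))$, consistent with a codimension-$p$ cycle in $U \times \cube^n$ pushing to a codimension-$(p-\delta)$ cycle in $U' \times \cube^n$.

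I expect the work to be bookkeeping rather than conceptual. The one point deserving care is to check that every compatibility invoked is literally among the axioms of the appendix, in particular the existence of a flat (or at least smooth-morphism) pullback $\overline f^*$ on the $C_\calD$'s and its compatibility with $\cl$ and $\boxtimes$; in the concrete case of the current complexes this is just the pullback of currents along a submersion, which exists. If no standalone flat pullback is imposed in the abstract setting, the single use made of it, $\overline f^* \cl(U') = \cl(U)$, should instead be phrased directly as a compatibility of the cycle maps, after which the chase goes through verbatim.
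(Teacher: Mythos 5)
Your argument is correct and is essentially the paper's proof read in the opposite direction: both rest on the same four facts, namely $\cl(f_*Z)=(\overline f\times\id)_*\cl(Z)$, the identity $(\overline f\times\id)^*(\cl(U')\boxtimes R^n)=\cl(U)\boxtimes R^n$, the projection (inverse mapping) formula for the proper map $\overline f\times\id$, and functoriality of push-forward via $\pr_{X'}\circ(\overline f\times\id)=\overline f\circ\pr_X$. The only nuance is that the paper's axiom states the projection formula ``whenever the right hand side is defined,'' so starting (as the paper does) from a $Z$ with $r_\calD(Z)$ defined avoids the ``iff'' claim you make about definedness, but this does not affect the substance of the argument.
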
 

\begin{proof}
Let $Z \in c^p(U,n)$ such that $r_\calD(Z)$ is defined. Then
\begin{align*}
   \overline{f}_* r_\calD(Z) 
   &= \overline{f}_* \circ (\pr_X)_* \Big( (\cl(U) \boxtimes R^n) \cap \cl(Z) \Big) \\
   &= (\pr_{X'})_* \circ (\overline f \times \id_{\cubeBar^{n}})_* \Big( (\cl(U) \boxtimes R^n) \cap \cl(Z) \Big). 
\intertext{ Noting that $\cl(U) \boxtimes R^n = (\overline f \times \id_{\cubeBar^n})^* (\cl(U') \boxtimes R^n)$, the projection formula yields}
   &= (\pr_{X'})_* \Big( (\cl(U') \boxtimes R^n) \cap (\overline f \times \id_{\cubeBar^n})_* \cl(Z) \Big) \\
   &= (\pr_{X'})_* \Big( (\cl(U') \boxtimes R^n) \cap \cl( f_* Z ) \Big) \\   
   &= r_\calD(\cl( f_* Z )).
\end{align*}
The last equality holds because $\cl$ is compatible with push forward. 
\end{proof}

%\begin{remark}
%Any smooth algebraic morphism is a submersion (Hartshorne, \cite[III, Prop 10.4]{Hartshorne_AlgebraicGeometry}). In particular, $\overline{f}^*$ is defined. 
%\end{remark}

\paragraph{Compatibility with $S_{n}$ action}
The construction so far gives a map defined on higher Chow groups with $\Z$-coefficients and, by linearity, for coefficients in arbitrary rings.

In case that the exterior product $\boxtimes$ on the complexes $C_\calD$ is graded-commutative, we will see that $r_\calD(\alt(Z)) = r_\calD(Z)$. This simplifies the calculation of the regulator for those cycles that are given as the alternation of some cycle. In particular, it says that the image of an alternating cycle has integral coefficients. 

\begin{lemma} If the exterior product on $C_\calD$ is graded-commutative, then the diagram below commutes
\[
  \xymatrix@C=5em{
     \calN^\bullet(U,p)_A \ar[d]_{\alt} \ar[r]^{r_\calD}  &  C_\calD^\bullet(X,D,A(p)) \ar@{^(->}[d] \\
     \calN^\bullet(U,p)_{A_\Q}^{\alt} \ar[r]^{r_\calD}  &  C_\calD^\bullet(X,D,A_\Q(p)) 
  }
\]
%%%%%%%%%%%% with A = \Z : 
%\[
%  \xymatrix@C=5em{
%     \calN^\bullet(U,p) \ar[d]_\alt \ar[r]^{r_\calD}  &  C_\calD^\bullet(X,D,\Z(p)) \ar@{^(->}[d] \\
%     \calN^\bullet(U,p)_\Q^\alt \ar[r]^{r_\calD} \ar@{-->}[ru]  &  C_\calD^\bullet(X,D,\Q(p)) 
%  }
%\]
\end{lemma}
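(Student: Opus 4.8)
The plan is to reduce everything to the single identity $r_\calD(g\cdot Z)=r_\calD(Z)$ for each $g\in S_n$ and each higher Chow chain $Z\in z^p(U,n)$ (an element of $\calN^\bullet(U,p)$). Granting this, linearity of $r_\calD$ together with the averaging formula $\alt(Z)=\tfrac{1}{n!}\sum_{g\in S_n} g\cdot Z$ yields $r_\calD(\alt Z)=\tfrac1{n!}\sum_{g} r_\calD(g\cdot Z)=r_\calD(Z)$, an equality computed a priori in $C_\calD^\bullet(X,D,A_\Q(p))$. Since the right-hand side manifestly lies in the subgroup $C_\calD^\bullet(X,D,A(p))$, this is precisely the commutativity of the square (and, as noted in the text, it shows the image of an alternating cycle has $A$-coefficients).

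So fix $g\in S_n$. Recall that the $S_n$-action used to define the alternating subcomplex is the geometric permutation action twisted by the sign character: $g\cdot Z=\sign(g)\,\tilde g_*Z$, where $\tilde g=\id_X\times g\colon X\times\cubeBar^n\to X\times\cubeBar^n$ permutes the cube factors. (The twist is forced — without it $\alt$ would not commute with the Bloch differential, so one would not get a subcomplex.) Now compute $r_\calD(\tilde g_*Z)$ using the standing compatibilities on the complexes $C_\calD$: since $\cl$ commutes with proper push-forward, $\cl(\tilde g_*Z)=\tilde g_*\cl(Z)$; since $\tilde g$ is an isomorphism and $\pr_X\circ\tilde g=\pr_X$, the projection formula (applied exactly as in the functoriality lemma and the compatibility-with-products computation above) gives
\begin{align*}
  r_\calD(\tilde g_*Z)
  &= (\pr_X)_*\big((\cl(U)\boxtimes R^n)\cap\tilde g_*\cl(Z)\big)\\
  &= (\pr_X)_*\tilde g_*\big(\tilde g^*(\cl(U)\boxtimes R^n)\cap\cl(Z)\big)\\
  &= (\pr_X)_*\big(\tilde g^*(\cl(U)\boxtimes R^n)\cap\cl(Z)\big).
\end{align*}

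It remains to identify $\tilde g^*(\cl(U)\boxtimes R^n)$. As $\tilde g$ acts trivially on the $X$-factor and $R^n=R^1\boxtimes\cdots\boxtimes R^1$, we have $\tilde g^*(\cl(U)\boxtimes R^n)=\cl(U)\boxtimes g^*R^n$, with $g$ now permuting the $n$ factors of $\cubeBar^n$. Here the hypothesis is used: $\boxtimes$ on $C_\calD$ is graded-commutative and $R^1\in C_\calD^1(\cubeBar,1,\Z(1))$ has odd degree $1$, so interchanging two adjacent copies of $R^1$ introduces a sign $-1$; iterating over a factorization of $g$ into adjacent transpositions, and using that all factors coincide, gives $g^*R^n=\sign(g)\,R^n$. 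Hence $\tilde g^*(\cl(U)\boxtimes R^n)=\sign(g)(\cl(U)\boxtimes R^n)$, and combining with the display,
\[
  r_\calD(g\cdot Z)=\sign(g)\,r_\calD(\tilde g_*Z)=\sign(g)^2\,r_\calD(Z)=r_\calD(Z).
\]

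The main obstacle is purely the bookkeeping of signs: one must check that the sign $\sign(g)$ built into the $S_n$-action on higher Chow chains is exactly the Koszul sign produced when reordering the odd-degree classes $R^1$ inside $R^n$, so that the two cancel rather than reinforce. (If the exterior product were \emph{not} graded-commutative this cancellation fails, and indeed then $r_\calD$ need not be $S_n$-invariant — consistent with the fact that this lemma is stated only under the graded-commutativity hypothesis.) Everything else used — equivariance of $\cl$, compatibility of $\boxtimes$ with push-forward, the projection formula, and $\pr_X\circ\tilde g=\pr_X$ — is part of the list of requirements on the complexes $C_\calD$ recalled in the appendix and was already exploited in the preceding lemmas, so no new input is needed there.
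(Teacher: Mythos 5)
Your proof is correct and follows essentially the same route as the paper: the paper also reduces to $S_n$-invariance of the regulator, noting that graded-commutativity of $\boxtimes$ makes $R^n$ invariant under the signed permutation action, and then uses $\cl(gZ)=g_*\cl(Z)$, compatibility of $\cap$ with the (iso)push-forward, and $\pr_X\circ\tilde g=\pr_X$. Your version merely makes the sign bookkeeping and the averaging step explicit and phrases the middle step via pullback and the inverse mapping formula instead of push-forward along the isomorphism, which is equivalent.
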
 

\begin{proof}
The action of the symmetric group $S_n$ on $X \times \cubeBar^n$ induces an action on the complexes $C_\calD(X \times \cubeBar^n, D \boxtimes \mathbbm{1}^n, A)$ through functorial push forward. 
Because $\boxtimes$ is a graded-commutative exterior product, $R^n$ is invariant under the action of $S_n$. 

As a consequence (note that $g_*$ acts as the identity on $\cl(U) \boxtimes R^n$), 
\begin{align*}
   r_\calD( g Z ) 
      &= \pr_* \Big( (\cl(U) \boxtimes R^n) \cap \cl( g Z ) \Big) \\
      &= \pr_* \Big( (\cl(U) \boxtimes R^n) \cap g_* \cl( Z ) \Big) \\
      &= \pr_* \Big( g_* ( \cl(U) \boxtimes R^n \cap \cl( Z ) ) \Big)  \\
      &= \pr_* \Big( \cl(U) \boxtimes R^n \cap \cl( Z ) \Big)      
      = r_\calD(Z),
\end{align*}
i.e., the regulator is $S_n$-invariant.
\end{proof}

Another consequence of the above lemma is that the regulator is also compatible with the product on the alternating complexes. Thus the restriction of the regulator map to alternating cycles is a partially defined map of (partially defined) graded commutative algebras 
\[
    \bigoplus_p \calN^\bullet(U,p)^{\alt}_\Q \longrightarrow \bigoplus_p C_\calD^\bullet(X,D,\Q(p))
\]
if the product on $C_\calD$ is graded commutative.

\begin{remarks}
\item 
    By the moving lemma for higher Chow groups, the restriction induces a quasi-isomorphism of complexes $z^p(X,D,\bullet) := \frac{z^p(X,\bullet)}{i_* z^{p-1}(D,\bullet)} \to z^p(U,\bullet)$, and similar with subscript $\R$ (see \cite[5.9]{KLM_AbelJacobi}). Thus our construction can be given completely in the ''framework of pairs''.

\item Notice that the definition of the regulator map can be rewritten as 
      \[
           r_C ( Z ) = (\pr_X)_* \Big( \left(\pr_{\cubeBar^n}^{X \times \cubeBar^n}\right)^* (R^n) \cap \cl(Z) \Big).
      \]
      In this form, the regulator value of $Z$ can be seen as the image of the current $R^n$ under the (analytic) correspondence given by $Z$.
\item 
     The regulator is determined by $R^1$. 
     Conversely, $R^1$ can be recovered from the regulator on $U = \cube$ as the value of the diagonal $\Delta$, 
     \[
         R^1 = r_\calD(\Delta).
     \]
     In particular, the regulator is determined by giving a map $z^1(\cube,1) \to C^1_\calD(\cubeBar,1,\Z(1))$.
%\item      
%     One can argue why the abstract regulator $z^p(X,n) \to C_\calD^{2p-n}(X,\Z(p))$ should be of the form as given in section \ref{sec:abstract_regulator} as follows. 
%     Assuming that a cycle $Z$ is completely described by its associated Deligne cycle, then the regulator should factorize as a composition
%\[
%    z^p(X,n) \xrightarrow{\cl} C_\calD^{2p}(X \times \cube^n, \Z(p)) \xrightarrow{R^n} C_\calD^{2p-n}(X, \Z(p)).
%\]
%The maps $R^n$ can be seen as regulator maps on the Deligne complexes, defined for "higher Deligne cycles".
%It seems natural to require that these analytic regulators are associative in the sense that $R^{n+m} = R^n \circ R^m$. In other words, the $R^n$ are completely determined by the map
%\[
%    R^1 : C_\calD^{2p}(X \times \cube, \Z(p)) \to C_\calD^{2p-1}(X,\Z(p)).
%\] 
%If we further assume that this map has the form $c \mapsto (\pr_X)_* (\pr^* R^1 \cap c)$, that is, the map $R^1$ is induced by an element (also called) $R^1 \in C_\calD^1(\cube,\Z(1))$, then by induction the form of $R^n$ turns out to be 
%\begin{align*}
%   R^n(-) = (\pr_X)_* \Big( (\pr_{\cube})^*  (R^1)^{\times n} \cap -  \Big).
%\end{align*}
%This is exactly the form which has been assumed for the abstract regulator. 
\end{remarks}

\section{Regulator into the 3-term complex}
We now apply the above construction to the 3-term complex of currents from section \ref{sec:deligne_cohomology}. For $U$ a quasi-projective manifold with compactification $X$ and normal crossing boundary divisor $D$, it is the complex
%\[
%     \Tot \left( \calI(X,\Z(p)) \oplus F^p \sD(X) \xrightarrow{ -\delta + \iota} \sD(X) \right).
%\]
\[
     \Tot \left( \calI(X, D,\Z(p)) \oplus F^p \sD(X, \log D) \xrightarrow{ -\delta + \iota} \sD(X, \log D) \right).
\]
These complexes inherit the required functoriality properties from the functoriality of the underlying currents (componentwise applied). To have the correct twists in the coefficients after pushing forward along a morphism $f : X \to Y$, one has to multiply the componentwise push forward with an additional factor that involves the relative dimension $\rho = \dim_\C Y - \dim_\C X$. More precisely, one sets $f_*(a,b,c) = (2\pi i)^{\rho} (f_* a , f_* b , f_* c)$.

For an algebraic cycle $Z$ of codimension $p$ its associated Deligne cycle in the 3 term complex is given by the tuple $\cl(Z) = (2 \pi i)^p ([Z],[Z],0)$, with $[Z]$ being the integral $(p,p)$-current of integration over the non-singular part $Z_\reg$. More precisely, it is the simple extension of $[Z_\reg]$ to the whole $X \times \cubeBar^n$, as studied in \cite{Lelong_IntegrationSurUnEnsembleAnalytiqueComplexe}, \cite{Herrera_IntegrationOnASemianalyticSet}. The currents $[Z]$ are $d$-closed and hence the tuple is a cycle and represents a cohomology class.

On such a 3-term\footnote{Beilinson originally defined his product more general for $n$-term-complexes in his notes on 	absolute Hodge cohomology in 1983, which appeared 1 year earlier than the Russian original of \cite{BeilinsonHigherRegulatorsAndValuesOfLFunctions} complexes where he introduced Deligne cohomology (on the 3 term complex).} complex as above, Beilinson \cite[1.11]{NotesOnAbsoluteHodgeCohomology} found a whole family of products indexed by a real parameter $\alpha$, see also Esnault/Viehweg \citep{EsnaultViehweg}. 
%These complexes are equipped with a family of products, indexed by a real parameter $\alpha$. 
For a tuple $(a,b,c)$ of total degree $r$, its product with another tuple is expressed by the table
\[
\begin{array}{c|ccc}
  & \tilda & \tildb & \tildc \\ 
\hline 
a & a \cap \tilda & 0 & (-1)^r (1-\alpha) \cdot a \cap \tildc  \\ 
b & 0 & b \cap \tildb & (-1)^r \alpha \cdot b \cap \tildc \\ 
c & \alpha \cdot c \cap \tilda & (1-\alpha) \cdot c \cap \tildb & 0 \\ 
\end{array} 
\]
As the intersection product for currents is only partially defined, the same also holds for the above defined product on the 3-term complex. 
For the parameters $\alpha = 0$ and $\alpha = 1$ the product is associative, for $\alpha = \frac{1}{2}$ graded commutative.
Any two of these products are homotopic, so they all induce the same product on cohomology. \\
Replacing the intersection in the above table with the exterior product yields an exterior product on the 3-term complex. This allows reduction to the diagonal.

In order to apply the construction from the preceding section, we need to define a "base" element in $C_\calD^1(\cubeBar,1,\Z(1))$. For that, we copy from \cite[5.3]{KLM_AbelJacobi}. Choose the logarithm on $\P^1$ branched over $\Rm = [-\infty, 0]$, with $\Rm$ oriented in such a way that its boundary $\partial \Rm = 0 - \infty = (z)$ is the divisor of the coordinate function. Then define 
%
%To define\footnote{Here we use the same notation as in KLM. Maybe it is better to use the one as in section \ref{sec:abstract_regulator} ?? Then the lift of the regulator in section \ref{sec:regulator_to_holim} should be written as $\tilde{r_\calD}$.} the underlying "base" element for the regulator in $C_\calD^1(\cube,1)$ as
\[
    R^1 := \left( 2 \pi i [\Rm], [\dlog z],  [\log z] \right).
\]
From the formula of currents $d[\log z] = [\dlog z] - 2 \pi i [\Rm]$, as proved for example in \cite{KLM_AbelJacobi}, one obtains that the above element has differential $d R^1 = 2 \pi i \cdot ((z),(z),0)$, that is, the cycle associated to the divisor of the coordinate $z$.

%Here $AJ^1$ is chosen such that $d AJ^1 = 2 \pi i \cdot ((z),(z),0)$ is the class of the divisor of the coordinate $z$ (for this recall the formula of currents $d[\log z] = [\dlog z] - 2 \pi i [\R^-]$, as proved for example in \cite{KLM}).
%\[
%    AJ^1 := ( \R^-, \dlog(z)/(2\pi i), \log(z)/(2\pi i) )
%\]

%Next choose an associative product on this complex and define the $n$-fold exterior product in Deligne cohomology
%\[
%       AJ^n := AJ^1 \times \cdots \times AJ^1.
%\]
%This gives an element in the weight $n$ deligne complex on $\cube^n$.
%Now define the regulator as the map $\CH^p(X,n) \to H_\calD^{2p-n}(X,\Z(p))$ as follows. 
%Let $Z$ be an algebraic cycle in $X \times \cube^\bullet$. Interpret $Z$ as a (closed) element in the 3 term complex on $X \times \cube^\bullet$ via its associated Deligne cycle $(Z, \delta_Z, 0)$. 
%%Pull $AJ^n$ back along the projection map to obtain a current on $X \times \cube^n$. 
%Assume that $Z$ is in good position with respect to $AJ^n$ and define the Abel-Jacobi image of $Z$ to be 
%\[
%     AJ(Z) := (2\pi i)^{p-m} \cdot (\pi_X)_* \Big( (\pi_{\cube})^* AJ^n \cdot \{Z\}  \Big).
%\]
%This is an element in\footnote{$(\pi_X)_* \left( C_\calD^n(X\times \cube^n,\Z(n)) \cdot C^{2p}_\calD (X\times \cube^n,\Z(p)) \right) \subset (\pi_X)_* C_\calD^{n+2p} ( X \times \cube^n, \Z(n+p)) \subset C_\calD^{2p-n}(X, \Z(p) )$}  $C_\calD^{2p-n}(X,\Z(p))$.

\paragraph{The KLM-regulator}
Note that for the construction from section \ref{sec:abstract_regulator} to be applicable (well-defined), it suffices that the exterior product in question is associative. In particular, it applies to Beilinson's classical product for the parameter $\alpha = 0$.
%Since Beilinson's classical product for the parameter $\alpha = 0$ is associative, the construction from section \ref{sec:abstract_regulator} can be applied.  
In this case the product is given by the formula 
\[
     (a,b,c) \, \cap_0 \, (\tilda,\tildb,\tildc) 
     = \left( a \cap \tilda \,,\, b \cap \tildb \,,\, c \cap \tildb + (-1)^r a \cap \tildc \right)
\]
whenever the right hand side exists. 
Similarly for the exteior product. 
The $n$-th exterior power of $R^1$ is an element in the Deligne-Beilinson complex $C^n(\cubeBar^n,\mathbbm{1}^n,\Z(n))$. 
%If its components are denoted by $R^n = (T^n, \Omega^n, L^n)$, they are computed to be (with signs coming from the comparison of the alternating ($\wedge$) and the symmetric ($\times$) exterior product)
%\begin{align*}
%    T^n      &= (2 \pi i)^n (\Rm)^{\times n}  \\
%    \Omega^n &= \Omega(z_1, \ldots, z_n ) = (-1)^{\binom{n}{2}} [ \dlog z_1 \wedge \ldots \wedge \dlog z_n ] \\
%    L^n      &= L(z_1,\ldots, z_n) 
%               =  [\log z_1] \times \Omega( z_2, \ldots z_n) 
%                 - 2 \pi i [\Rm] \times L(z_2,\ldots, z_n) \\    
%              &=  \sum_{k=0}^{n-1} (- 2 \pi i)^k [ \underset{k \text{ times}}{\underbrace{\Rm \times \ldots \times \Rm}} ] \times [\log z_{k+1} ] \times [ \dlog z_{k+2} ] \times \ldots \times [ \dlog z_n ] \\
%              &=  \sum_{k=0}^{n-1} (-1)^{\binom{n-k-1}{2}} (- 2 \pi i)^k [ \Rm \times \ldots \times \Rm ] \times [\log z_{k+1} \dlog z_{k+2} \wedge \ldots \wedge \dlog z_n ] \\    
%              &=  \sum_{k=0}^{n-1} (-1)^{k(n-k) + \binom{n-k-1}{2}} (2 \pi i)^k [ \Rm \times \ldots \times \Rm ] \wedge [\log z_{k+1} \dlog z_{k+2} \wedge \ldots \wedge \dlog z_n ] \\                              
%              &=  (-1)^{\binom{n-1}{2}} \sum_{k=0}^{n-1} (-1)^{\binom{k+1}{2}} (2 \pi i)^k [ \Rm \times \ldots \times \Rm ] \wedge [\log z_{k+1} \dlog z_{k+2} \wedge \ldots \wedge \dlog z_n ].
%\end{align*}
Its components $R^n = (T^n, \Omega^n, L^n)$ can be computed to be (with signs coming from the comparison of the "homological" ($\times$) and the "cohomological" ($\boxtimes$) exterior product)
\begin{align*}
    T^n      &= (-1)^{\binom{n}{2}} (2 \pi i)^n [(\Rm)^{\times n}]  \\
    \Omega^n &= \Omega(z_1, \ldots, z_n ) = [ \dlog z_1 \wedge \ldots \wedge \dlog z_n ] \\
    L^n      &= L(z_1,\ldots, z_n) 
               =  [\log z_1] \boxtimes \Omega( z_2, \ldots z_n) 
                 - 2 \pi i [\Rm] \boxtimes L(z_2,\ldots, z_n) \\    
              &=  \sum_{k=0}^{n-1} (- 2 \pi i)^k [\Rm]^{\boxtimes k} \boxtimes [\log z_{k+1} ] \boxtimes [ \dlog z_{k+2} \wedge \ldots \wedge \dlog z_n ] \\
              &=  \sum_{k=0}^{n-1} (-1)^{\binom{k}{2}} (- 2 \pi i)^k [ \underset{k \text{ times}}{\underbrace{\Rm \times \ldots \times \Rm}} ] \boxtimes [\log z_{k+1} ] \boxtimes [ \dlog z_{k+2} \wedge \ldots \wedge \dlog z_n ] \\
              &=  \sum_{k=0}^{n-1} (-1)^{\binom{k+1}{2}} (2 \pi i)^k [ \underset{\text{first } k \text{ coordinates}}{\underbrace{\Rm \times \ldots \times \Rm}} ] \wedge (\log z_{k+1} \dlog z_{k+2} \wedge \ldots \wedge \dlog z_n ).                  
\end{align*}
For example, the first exterior powers of $R^1$ are 
\begin{align*}
    R^2 &= \Big( -(2 \pi i)^2 [\Rm \times \Rm], [\dlog z_1 \wedge \dlog z_2], [\log z_1 \dlog z_2] - 2 \pi i [\Rm] \boxtimes [\log z_2] \Big) \\
    R^3 &= \Big( -(2 \pi i)^3 [\Rm \times \Rm \times \Rm], [\dlog z_1 \wedge \dlog z_2 \wedge \dlog z_3], \\
    & \qquad \ [\log z_1 \dlog z_2 \wedge \dlog z_3] - 2 \pi i [\Rm] \boxtimes [\log z_2 \dlog z_3] - (2 \pi i)^2 [\Rm \times \Rm] \boxtimes [\log z_3] \Big) .
\end{align*}
Let $Z \in z^p(U,n)$ be a higher Chow cycle. 
Since $\bigcap_0$-multiplication with $([Z],[Z],0)$ is just componentwise intersection with the current represented by $Z_\reg$, the regulator of $Z$ is (again up to signs) 
\[
     r_C(Z) = (2\pi i)^{p-n} \cdot (T_Z,\Omega_Z, L_Z) 
\]
where
\begin{itemize}
\item $T_Z = (2 \pi i)^n (\pr_X)_* \Big(([X] \boxtimes T^n) \cap [Z] \Big)$, 
\item $\Omega_Z = (\pr_X)_* \Big( ([X] \boxtimes \Omega^n) \cap [Z] \Big)$, 
\item $L_Z = (\pr_X)_* \Big(  ([X] \boxtimes L^n) \cap [Z] \Big)$.
\end{itemize}
%\begin{itemize}
%\item $T_Z = (2 \pi i)^n (\pr_X)_* \Big( \pr_{\cubeBar^n})^* T^n \cap [Z] \Big)$, 
%\item $\Omega_Z = (\pr_X)_* \Big( \pr_{\cubeBar^n})^* \Omega^n \cap [Z] \Big)$ 
%\item $L_Z = (\pr_X)_* \Big(  (\pr_{\cubeBar^n})^* L^n \cap [Z] \Big)$.
%\end{itemize}
%where $T_Z = (2 \pi i)^n (\pr_X)_* \Big(([X] \times T^n) \cap [Z] \Big)$, $\Omega_Z = (\pi^Z_X)_* (\pi^Z_{\cube^n})^* \Omega^n$ and $L_Z = (\pi^Z_X)_* (\pi^Z_{\cube^n})^* L^n$.
Note that for these currents to be well-defined one needs that $Z$ intersects the $\Rm$ components properly, i.e., one has to restrict to cycles in $z_\R^p(U,n)$ to get a everywhere defined map.

This map agrees (up to $2\pi i$-factors and various signs) with the map given in \cite{KLM_AbelJacobi}.

\begin{remark}
\begin{itemize}
\item More correctly one should replace the $Z$ in the above formula by $Z_\reg$.
      We make the convention that before actually performing the integration, 
      1) the domain of undefinedness of the integrand should be removed from the integration domain 
      2) the integration domain should be replaced by its manifold points.

      %Under the ''proper intersection'' hypthesis, the removed part has higher codimension and thus the integral doesn't change.       
      
\item Since the product $\bigcap_0$ is graded-commutative only up to homotopy, this regulator map is no strictly commutative (i.e. on level of complexes) map of graded-commutative dga's.
\item One could similarly proceed with an arbitrary $\bigcap_\alpha$, but since in this case the product need not be associative, one has to choose explicitly how to evaluate the iterated products. Different choices give rise to different (though homological equivalent) regulators. 
\end{itemize}
\end{remark}

\section{The regulator into $P_\calD$}  \label{sec:regulator_to_holim}
In order to get a strict graded-commutative regulator map, we will now apply the construction to the complexes $P_\calD(X,D,A(p))$.
Recall that for $(X, D)$ a good compactification of $U$ one has 
%\[
%    P_\calD(X,p) = \left\{ w \otimes T \in \Omega(x) \otimes \calD(\Xbar) \mid w(0) T \in \calI(\Xbar,D,\Z(p)), w(1) T \in F^p \calD(\Xbar, \log D) \right\}.
%\]
\[
    P_\calD(X,D,A(p)) := \left\{ w \otimes T \in \Omega(x) \otimes \calD(X, \log D) \text{ such that } 
    \genfrac{}{}{0pt}{}{ w(0) T \in \calI(X, D, A(p)),} { w(1) T \in F^p \calD(X, \log D) } \right\}.
\]
%Recall that the weight $p$ Deligne cohomology for projective $X$ can be computed by the complex
%\[
%    P_\calD(X,p) = \left\{ w \otimes T \in \Omega(x) \otimes \calD(X) \mid w(0) T \in \calI(X,\Z(p)), w(1) T \in F^p \calD(X) \right\}.
%\]
These complexes inherit all the functorial properties from the complex of currents by letting a morphism act trivially on $\Omega(x)$. To get the correct coefficients, the push forward is twisted in exactly the same way as the push forward for $C_\calD$. 
%As in the 3-term complex, functoriality is inherited from the functoriality of $\calD$ and the identity on $\Omega(x)$, where the pushforward is again twisted to get the correct coefficients. 
A codimension $p$ cycle $Z$ in $X$ is represented by the constant path $\cl(Z) = (2 \pi i)^p [Z]$. 

They are equipped with a $\C$-linear exterior product coming from the wedge product on $\Omega(x)$ and the exterior product of currents. Explicitly, $(w \otimes T) \boxtimes (\eta \otimes S) := (-1)^{|S||\eta|} w \wedge \eta \otimes (S \boxtimes T)$.
Similarly, the intersection product is defined by replacing the exterior product in the above formula with the $\cap$ product. It is defined whenever the intersection of the underlying currents is defined and has the correct type (lies in $P_\calD$). Each pair of cohomology classes can be represented by cycles whose intersection in $P_\calD$ exists. 
The two products are both associative and graded-commutative in their sense.
With these definitions, reduction to the diagonal holds and compatibility of $\boxtimes$ and $\cap$ is satisfied for currents coming from geometry\footnote{See appendix. More precisely, one has $(S \cap T) \boxtimes (P \cap Q) = (-1)^{|T||P|} (S \boxtimes P) \cap (T \boxtimes Q)$ if all intersections exist. If $S, P$ or $T,Q$ are both coming from geometry, they have even degree and the sign vanishes.}
% $[(w_1 {\wedge} T_1) {\times} (w_2 {\wedge} T_2)] \cap [(\eta_1 {\wedge} S_1) {\times} (\eta_2 {\wedge} S_2)] = (-1)^{|w_2 T_2||\eta_1 S_1|+|T_1||S_2|+|T_2||S_1|}[(w_1 {\wedge} T_1) \cap (\eta_1 {\wedge} S_1) ] \times [(w_2 {\wedge} T_2) \cap (\eta_2 {\wedge} S_2)]$. If $w_i {\wedge} T_i$, $i = 1,2$ are coming from geometry, all $w_i$ and $T_i$ have even degree. Same if $\eta_i {\wedge} S_i$ are coming from geometry.}.

For the underlying element of the regulator we choose the element in $P_\calD^1(\P^1,1,\Z(1))$ defined as 
%the log current in $\P^1 \supset \cube^1$
%    R^1 := (1-x) \R^- + x \scriptfrac{\dlog z}{2\pi i} + dx \scriptfrac{\log z}{2 \pi i}
\[
    R^1 := (1-x) (2 \pi i) \Rm + x  \dlog z + dx \log z.
\]
%where $\R^- = \R_{\leq 0} \subset \P^1(\C)$ is oriented in such a way that $\partial \R^- = 0 - \infty = (z)$.
Its relation to the element that defines the regulator for the 3-term complex is described in \ref{subsec:comparison_of_regulators}.
Again using $d[\log z] = \dlog z - 2 \pi i [\Rm]$ one verifies that $dR^1 = 2 \pi i \cdot [0 - \infty] = \cl((z))$.

Exterior multiplication yields $R^n = R^1 \boxtimes \ldots \boxtimes  R^1$, which for small values of $n$ is (with $2\pi i$-factors before $\Rm$ omitted) given by the currents on $\cubeBar^n$:
%\begin{align*}
%    R^2 = &(1-x)^2 \Rm {\times} \Rm + x^2 \dlog {\times} \dlog 
%        + x (1-x) [\Rm {\times} \dlog + \dlog {\times} \Rm ]  \\
%        &+ x dx [\log {\times} \dlog - \dlog {\times} \log ] 
%        + (1-x) dx [ \log {\times} \Rm - \Rm {\times} \log ]  \\[\baselineskip]
%    R^3 = & (1-x)^3 \Rm {\times} \Rm {\times} \Rm 
%        + (1-x)^2 x [ \Rm {\times} \Rm {\times} \dlog + \Rm {\times} \dlog {\times} \Rm + \dlog {\times} \Rm {\times} \Rm] \\
%        &+ (1-x) x^2 [\Rm {\times} \dlog {\times} \dlog + \dlog {\times} \Rm {\times} \dlog + \dlog {\times} \dlog {\times} \Rm] 
%         + x^3 \dlog {\times} \dlog {\times} \dlog \\ 
%        &+ x^2 dx [\log {\times} \dlog {\times} \dlog - \dlog {\times} \log {\times} \dlog + \dlog {\times} \dlog {\times} \log] \\
%        &+ (1-x)x dx [\log {\times} \Rm {\times} \dlog - \Rm {\times} \log {\times} \dlog + \Rm {\times} \dlog {\times} \log ] \\
%        &+ (1-x)x dx [\log {\times} \dlog {\times} \Rm - \dlog {\times} \log {\times} \Rm + \dlog {\times} \Rm {\times} \log] \\ 
%        &+ (1-x)^2 dx [\log {\times} \Rm {\times} \Rm - \Rm {\times} \log {\times} \Rm + \Rm {\times} \Rm {\times} \log] 
%\end{align*}
\begin{align*}
    R^2 = &(1-x)^2 \Rm \boxtimes \Rm + x^2 \dlog \boxtimes \dlog 
        + x (1-x) [\Rm \boxtimes \dlog + \dlog \boxtimes \Rm ]  \\
        &+ x dx [\log \boxtimes \dlog - \dlog \boxtimes \log ] 
        + (1-x) dx [ \log \boxtimes \Rm - \Rm \boxtimes \log ]  \\[\baselineskip]
    R^3 = & (1-x)^3 \Rm \boxtimes \Rm \boxtimes \Rm 
        + (1-x)^2 x [ \Rm \boxtimes \Rm \boxtimes \dlog + \Rm \boxtimes \dlog \boxtimes \Rm + \dlog \boxtimes \Rm \boxtimes \Rm] \\
        &+ (1-x) x^2 [\Rm \boxtimes \dlog \boxtimes \dlog + \dlog \boxtimes \Rm \boxtimes \dlog + \dlog \boxtimes \dlog \boxtimes \Rm] 
         + x^3 \dlog \boxtimes \dlog \boxtimes \dlog \\ 
        &+ x^2 dx [\log \boxtimes \dlog \boxtimes \dlog - \dlog \boxtimes \log \boxtimes \dlog + \dlog \boxtimes \dlog \boxtimes \log] \\
        &+ (1-x)x dx [\log \boxtimes \Rm \boxtimes \dlog - \Rm \boxtimes \log \boxtimes \dlog + \Rm \boxtimes \dlog \boxtimes \log ] \\
        &+ (1-x)x dx [\log \boxtimes \dlog \boxtimes \Rm - \dlog \boxtimes \log \boxtimes \Rm + \dlog \boxtimes \Rm \boxtimes \log] \\ 
        &+ (1-x)^2 dx [\log \boxtimes \Rm \boxtimes \Rm - \Rm \boxtimes \log \boxtimes \Rm + \Rm \boxtimes \Rm \boxtimes \log] 
\end{align*}
In general $R^n = R^n_0 + R^n_1$ where $R^n_i$ consists of those summands of $R^n$ whose $dx$-degree is $i$. They satisfy 
%\begin{align*}
%   R^{n+1}_0 &= (1-x) 2 \pi i R^n_0 {\boxtimes} \Rm + x R^n_0 {\boxtimes} \dlog \\
%   R^{n+1}_1 &= (-1)^n dx R^n_0 {\boxtimes} \log + (1-x) 2 \pi i R^n_1 {\boxtimes} \Rm + x R^n_1 {\boxtimes} \dlog 	
%\end{align*}
\begin{align*}
   R^{n+1}_0 &= (1-x) 2 \pi i R^n_0 \boxtimes \Rm + x R^n_0 \boxtimes \dlog \\
   R^{n+1}_1 &= (-1)^n dx R^n_0 \boxtimes \log + (1-x) 2 \pi i R^n_1 \boxtimes \Rm + x R^n_1 \boxtimes \dlog 	
\end{align*}
Thus $R^n_0$ consists of $2^n$ summands: all possible combinations built from $\dlog$ and $\Rm$. The degree one part $R^n_1$ grows faster: It consists of $n 2^{n-1}$ summands, $2^{n-1}$ for each position where the $\log$ term can be placed.

The resulting regulator map $r_P :  z^p_\R(U,n) \to P_\calD^{2p-n}(X,D,\Z(p))$ is then given by
\begin{align*}
      r_P(Z) = (2\pi i)^{p} \pr_* \left( \pr^* R^n \cap [Z] \right).
\end{align*}

%\begin{align*}
%      CH^p(X,n) \to P_\calD^{2p-n}(X,p) \\
%      Z \mapsto (2\pi i)^{p-n} \pr_* ( \pr^* R^n \cdot [Z] ).
%\end{align*}

\subsection{Comparison} \label{subsec:comparison_of_regulators}
We now compare the two versions of the regulator map. The target complexes of these maps are related by the evaluation homomorphisms
\begin{equation}
  %\label{ev} \tag{ev}
  \begin{aligned} 
  ev : P_\calD^n(X,D,A(p)) &\to C_\calD^n(X,D,A(p)) \\
  w \otimes T  &\mapsto \left( w(0) T, w(1) T, \int_{[0,1]} w \cdot T \right). 
  \end{aligned}
\end{equation} 
When $n$ varies, these maps give rise to a morphism of complexes, which turns out to be a quasi-isomorphism whenever $\Q \subset A$. This is content of the following lemma, that seems to be well-known to experts.  
\begin{lemma}
If $\Q \subset A$, the morphism $ev$ is a quasi-isomorphism. A quasi-inverse is induced by the maps 
\begin{align*}
  s : C^n_\calD(X,D,A(p)) &\to P_\calD^n(X,D,A(p)) \\
      (a,b,c)        &\mapsto (1-x) \otimes a + x \otimes b + dx \otimes c .
\end{align*}
\end{lemma}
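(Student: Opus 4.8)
The plan is to verify directly that $ev$ and $s$ are morphisms of complexes, that $ev\circ s=\id$, and that $s\circ ev$ is homotopic to the identity; then since $s$ is automatically a quasi-inverse of $ev$ (a left inverse that is also a homotopy inverse on one side suffices together with the two-out-of-three property), we conclude. First I would check that $s$ lands in $P_\calD$: for $(a,b,c)\in C^n_\calD$ one has $a\in\calI$, $b\in F^p\calD$, and $w(0)T$ for $w\otimes T=(1-x)\otimes a+x\otimes b+dx\otimes c$ evaluates $x\mapsto 0$ to give $a\in\calI(X,D,A(p))$ while $x\mapsto 1$ gives $b\in F^p\calD$, as required. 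Next, a short computation with the differential on $\Omega(x)\otimes\calD$ (recalling $d(1-x)=-dx$, $dx\wedge dx=0$) shows $s$ commutes with the differentials of the total complexes $C_\calD$ and $P_\calD$; this is where one must be careful with the sign conventions in the mapping-cone/total complex and in $\Omega(x)\otimes\calD$, but it is a finite check. The identity $ev\circ s=\id$ is then immediate: $w(0)T=a$, $w(1)T=b$, and $\int_{[0,1]}((1-x)a+xb+c\,dx)=\tfrac12 a+\tfrac12 b+c$—

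Wait: that last integral gives $\tfrac12(a+b)+c$, not $c$, so $ev\circ s\ne\id$ on the nose. This is the main subtlety, and the fix is to observe that $ev\circ s$ differs from the identity only by the chain map $(a,b,c)\mapsto(0,0,\tfrac12(a+b))$, which I claim is null-homotopic: since $a-b$ is a coboundary in the relevant sense up to the connecting differential, one writes down an explicit homotopy on $C_\calD$ built from the structure maps $\delta,\iota$; alternatively one replaces $s$ by a rescaled version (e.g. using a path that is genuinely constant near $0$ and $1$, or subtracting the correction term) so that $ev\circ s=\id$ exactly, at the cost of a slightly more elaborate formula. Either way the point is that $ev\circ s$ is chain-homotopic to $\id_{C_\calD}$.

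For $s\circ ev\simeq\id_{P_\calD}$ one needs the hypothesis $\Q\subset A$ in an essential way. The idea is that $P_\calD\subset\Omega(x)\otimes\calD$ is built from $\Omega(x)$, the polynomial de Rham complex of the $1$-simplex, which is acyclic in the relevant range with the explicit contracting homotopy $h$ given by integration $\int_0^x$ (this requires dividing by integers, hence $\Q\subset A$). One transports $h$ through the tensor product and checks it preserves the boundary conditions defining $P_\calD$, or rather that the induced homotopy between $\id$ and $s\circ ev$ does; concretely, $s\circ ev(w\otimes T)$ keeps only the "affine in $x$" part determined by the endpoint values and the total integral, and the difference from $w\otimes T$ is killed by integrating against the kernel of $h$. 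The hard part will be precisely this last step: making the homotopy explicit while respecting the subtle endpoint conditions $w(0)T\in\calI$, $w(1)T\in F^p\calD$, so that the homotopy operator actually maps $P_\calD$ to $P_\calD$; on $\Omega(x)\otimes\calD$ alone it is routine, but the subcomplex condition is where care is needed. Finally, since $ev$ is then a chain map with a two-sided homotopy inverse $s$, it is a quasi-isomorphism, completing the proof.
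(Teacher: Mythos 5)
There is a genuine error at the step you call the ``main subtlety,'' and it propagates into an unnecessary and unsound repair. The integral $\int_{[0,1]} w\cdot T$ in the definition of $ev$ is the integral of a polynomial differential form over the $1$-simplex, so only the $dx$-component of $w$ contributes; the $0$-form parts integrate to zero over a $1$-dimensional domain. Hence for $s(a,b,c)=(1-x)\otimes a+x\otimes b+dx\otimes c$ one gets $ev\circ s(a,b,c)=\bigl(a,\,b,\,(\int_0^1 dx)\,c\bigr)=(a,b,c)$ exactly, as the paper asserts; your value $\tfrac12 a+\tfrac12 b+c$ comes from integrating the $0$-form parts as well, which is not the intended (or a workable) convention. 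A degree count shows this independently: in the total complex a triple of total degree $n$ has third component of degree $n-1$, so a contribution $\tfrac12(a+b)$ (degree $n$) in the third slot would make $ev$ fail to preserve degree. For the same reason your proposed correction $(a,b,c)\mapsto(0,0,\tfrac12(a+b))$ is a map of degree $+1$, so it cannot equal $ev\circ s-\id$, and the claim that it is null-homotopic is neither meaningful in this form nor proved; likewise the alternative of rescaling $s$ is not carried out. This whole detour should be deleted: $s$ is a chain map landing in $P_\calD$ (as you check correctly) and splits $ev$ on the nose.

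The remaining half of your argument, $s\circ ev\simeq\id_{P_\calD}$, follows the same idea as the paper (contract the polynomial-forms direction by integration, which is where $\Q\subset A$ enters), but you leave the homotopy unspecified and explicitly flag its compatibility with the endpoint conditions as the unresolved hard part. The paper closes exactly this gap with the explicit operator $h(w\otimes c)=x\int_{[0,1]}w\otimes c-\int_{[0,x]}w\otimes c$: since this expression vanishes under both evaluations $x=0$ and $x=1$, it lands in $P_\calD$ automatically (the boundary conditions become $0\in\calI$ and $0\in F^p\calD$), and a direct computation gives $dh+hd=s\circ ev-\id$. With $ev\circ s=\id$ and this homotopy, $ev$ is a quasi-isomorphism with quasi-inverse $s$; no two-out-of-three or one-sided-inverse argument is needed.
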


\begin{proof}
$s$ is indeed compatible with differentials, thus gives a map of complexes. 
It is obvious that $s$ splits the map $ev$, that is $ev \circ s = \id$.
It suffices to show that the map $s \circ ev : P_\calD \to P_\calD$ is homotopic to the identity. Such a homotopy was given by Burgos Gil during a summer school in Freiburg 2013. Define the homotopy $h : P_\calD \to P_\calD [-1]$ by 
\[
    h( w \otimes c) = x \int_{[0,1]} w \otimes c - \int_{[0,x]} w \otimes c.
\]
This map is well defined because of $\Q \subset A$ (the occuring integrals in general have rational coefficients).
One checks that $dh+hd = s \circ ev - \id$. In particular, $s \circ ev = \id$ on cohomology and so $ev$ is a quasi-isomorphism.
\end{proof}
Note that the underlying elements of the two regulators ($R^1_{P}$ and $R^1_{C}$, say) can be obtained from each other by applying the map $ev$ resp. its splitting. 

Note that moreover $ev$ is just a map of vector spaces (not of algebras). Transporting the product in $P_\calD$ to $C_\calD$ using the splitting however gives the product $\bigcap_{1/2}$. Thus on homology classes the product on $P_\calD$ is equal to $\bigcap_{1/2}$ and hence to $\bigcap_0$, since they are homotopic. That means that $ev$ induces an isomorphism of algebras on homology. 
%
%
%\vspace{2cm} 
%For each $n$ these maps give rise to a morphism of complexes, which turns out to be a quasi-isomorphism whenever $\Q \subset A$. In fact, there is a split of $ev$ given by sending a triple $(a,b,c)$ to the element $(1-x) \, a + x \, b + dx \, c$. This is a quasi-inverse of $ev$ and the underlying elements of the two regulators ($R^1_{P}$ and $R^1_{C}$, say) can be obtained from each other by applying the map $ev$ resp. its splitting. 
%
%Note that $ev$ is just a map of vector spaces (not of algebras). Transporting the product in $P_\calD$ to $C_\calD$ using the splitting however gives the product $\bigcap_{1/2}$. Thus on homology classes the product on $P_\calD$ is equal to $\bigcap_{1/2}$ and hence $\bigcap_0$, since they are homotopic. That means that $ev$ induces an isomorphism of algebras on homology. 

Applying this to the construction of the two regulators, one gets that the two elements $R^n_{C}, R^n_{P}$ underlying the constructions differ only by a boundary. This means that $ev(R^n_{P})$ equals $R^n_{C}$ up to boundaries, and that $s(R^n_{P})$ equals $R^n_{P}$ up to boundaries.
Using this, we can show that the two regulator are isomorphic on cohomology: 
%Since furthermore the cycle maps and pullback/push forward are compatible with $ev$ and the splitting, the two regulators are equal on homology. 
%Put together, we have the 
\begin{lemma} \label{lemma:comparison_rP_and_rC_cohomologically}
For $\Q \subset A$, the non-commutative diagram of complexes 
\[
\xymatrix{
  z_\R^p(U,\bullet)_A \ar[r]^{r_{C}} \ar[d]_{r_{P}} &  C_\calD^{2p-\bullet}(X,D,A(p)) \ar@{<->}^{qIso}[dl] \\
  P^{2p-\bullet}_\calD(X,D,A(p))
}
\]
commutes after passage to (co-)homology.
\end{lemma}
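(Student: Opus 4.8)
The plan is the following. Since $ev$ is a quasi-isomorphism (the previous lemma), it induces an isomorphism on cohomology; hence the triangle commutes on cohomology if and only if $ev\circ r_P$ and $r_C$ induce the same map $\CH^p(U,n)\to H^{2p-n}_\calD(U,A(p))$, equivalently if and only if $r_P$ and $s\circ r_C$ do. (All composites occurring here are morphisms of complexes: $r_P$ and $r_C$ by the compatibility of the abstract regulator with differentials, $ev$ and $s$ by the previous lemma.) I would prove the first of these.

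First I would show that $ev\circ r_P$ is again an instance of the abstract regulator of Section~\ref{sec:abstract_regulator} into the family $C_\calD$, computed with the base element $ev(R^n_P)$ in place of $R^n_C$, i.e.
\[
  ev\big(r_P(Z)\big) \;=\; (\pr_X)_*\Big(\big(\cl(U)\boxtimes ev(R^n_P)\big)\cap_0 \cl(Z)\Big).
\]
For this one needs only two compatibilities of $ev$: it commutes with proper push-forward (a morphism acts trivially on the factor $\Omega(x)$, and the fibre integration $\int_{[0,1]}$ commutes with $(\pr_X)_*$), and it intertwines the exterior and intersection products of $P_\calD$ with Beilinson's $\alpha=0$ exterior and intersection products on $C_\calD$ whenever one of the two factors is the constant path $\cl(W)$ attached to an algebraic cycle. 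The latter situation is exactly the one that occurs in the definition of $r_\calD$, where the factors $\cl(U)$ and $\cl(Z)$ play this role, and it is a direct computation from the definitions: on a constant path $1\otimes S$ with $S$ of even degree the $P_\calD$-product is carried out separately in each $\Omega(x)$-component, and then $ev$ on the one side and the $\alpha=0$ product table on the other both reduce to componentwise intersection with $S$. The only delicate point is the bookkeeping of the Koszul signs relating the homological ($\times$) and cohomological ($\boxtimes$) exterior products, the same signs already present in the displayed formulas for $R^n$.

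Next I would invoke the comparison already carried out in \ref{subsec:comparison_of_regulators}: one has $ev(R^1_P)=R^1_C$, and — using that $ev$ is a quasi-isomorphism split by $s$ and that the product of $P_\calD$ transported to $C_\calD$ is $\cap_{1/2}$, which is homotopic to $\cap_0$ — the elements $ev(R^n_P)$ and $R^n_C$ differ by a coboundary in $C_\calD^n(\cubeBar^n,\mathbbm{1}^n,\Z(n))$, say $ev(R^n_P)-R^n_C=dH^n$. Then I would apply the remark following the compatibility-with-differential computation in Section~\ref{sec:abstract_regulator}: changing the base element by a coboundary changes the resulting regulator only by a coboundary. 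Explicitly, since $\cl(U)$ and $\cl(Z)$ are $d$-closed and $d$ commutes with $(\pr_X)_*$, the Leibniz rule gives, for every $Z\in z^p_\R(U,n)$ with $\partial Z=0$,
\[
  ev\big(r_P(Z)\big)-r_C(Z) \;=\; (\pr_X)_*\Big(\big(\cl(U)\boxtimes dH^n\big)\cap_0\cl(Z)\Big) \;=\; d\,(\pr_X)_*\Big(\big(\cl(U)\boxtimes H^n\big)\cap_0\cl(Z)\Big),
\]
all occurring intersections being defined because $Z$ meets the real faces properly and $H^n$ is assembled from the same logarithmic currents as $R^n_P$. Hence $ev\circ r_P$ and $r_C$ agree on cohomology, which is the assertion.

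I expect the main obstacle to be the second step: checking, with the correct signs, that $ev$ really does turn $r_P$ into the abstract $C_\calD$-regulator with base $ev(R^n_P)$ — that is, that it commutes with the geometric exterior and intersection products and with push-forward in the precise sense required. Granting that, and granting the (already stated) fact that $ev(R^n_P)$ and $R^n_C$ are cohomologous, the rest is the formal computation above.
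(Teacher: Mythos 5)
Your proposal is correct and follows essentially the same route as the paper: it rests on the compatibility of $ev$ with proper push-forward and with the exterior/intersection products against cycle classes $\cl(U)$, $\cl(Z)$, together with the fact that $ev(R^n_P)$ and $R^n_C$ differ by a boundary, and then absorbs that boundary via the Leibniz rule (the paper leaves this last absorption implicit, writing simply ``$+\,$boundary''). The only difference is presentational: you make explicit the chain-level identity $ev(r_P(Z))-r_C(Z)=d\,(\pr_X)_*\bigl((\cl(U)\boxtimes H^n)\cap_0\cl(Z)\bigr)$, which the paper states only up to cohomology.
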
 

\begin{proof}
%$ev$ is compatible with push forward. Although $ev$ is in general not compatible with the exterior and the intersection product, one verifies easily that this is indeed true for the exterior product / intersection with cycles classes. 
%For example, $ev( W \cap \cl(Z) ) = ev(W) \cap_\alpha \cl(Z)$ for all $W \in P_\calD$, $\alpha \in [0,1]$ and all higher Chow cycles $Z$. Note that the first $\cl$ denotes the cycle map into $P_\calD$ and the second is the cycle map into $C_\calD$. 

$ev$ is compatible with push forward and, as one verifies easily, $ev$ is compatible with the intersection/exterior product with cycle classes. For example, $ev( W \cap \cl(Z) ) = ev(W) \cap_\alpha \cl(Z)$ for all $W \in P_\calD$, $\alpha \in \R$ and all higher Chow cycles $Z$. Note that the first $\cl$ denotes the cycle map into $P_\calD$ and the second is the cycle map into $C_\calD$. 
Using this,
\begin{align*}
   ev \circ \pr_* \Big( (\cl(U) \boxtimes R^n_P) \cap \cl(Z) \Big) 
         &= \pr_* \Big( ev( \cl(U) \boxtimes R^n_P) \cap_0 \cl(Z) \Big) \\
         &= \pr_* \Big( \cl(U) \boxtimes ev(R_P^n) \cap_0 \cl(Z) \\         
         &= \pr_* \Big( \cl(U) \boxtimes (R^n_C + \text{boundary}) \cap_0 \cl(Z) \Big) \\
         &= \pr_* \Big( (\cl(U) \boxtimes R^n_C) \cap_0 \cl(Z) \Big) + \text{boundary}.
\end{align*}
Thus one obtains $ev( r_P(Z) ) \equiv r_C(Z)$ on cohomology. 
\end{proof}

The above proof can be refined to give a result for cycles instead of merely cycle classes. 
The key observation is the following .
\begin{lemma} \label{lemma:comparison_RP_and_RC}
\[
     ev(R^n_P) = \alt_*(R^n_C).
\] 
\end{lemma}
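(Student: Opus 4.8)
The statement to prove is $ev(R^n_P) = \alt_*(R^n_C)$, an equality of currents on $\cubeBar^n$. I would prove it by induction on $n$, exploiting the recursive formulas for $R^n_P = R^n_{P,0} + R^n_{P,1}$ (the $dx$-degree decomposition) already recorded in the text, together with the recursive structure of $R^n_C = (T^n, \Omega^n, L^n)$ coming from $R^n_C = R^1_C \boxtimes R^{n-1}_C$ under Beilinson's $\cap_0$-exterior product. The base case $n=1$ is immediate: $ev(R^1_P) = ev((1-x)(2\pi i)\Rm + x\,\dlog z + dx\log z) = (2\pi i[\Rm],[\dlog z],[\log z]) = R^1_C$, and $\alt$ on one symbol is the identity, so $\alt_*(R^1_C) = R^1_C$ as well.

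**Key steps.** First I would compute $ev(R^n_P)$ componentwise using the definition $ev(w\otimes T) = (w(0)T, w(1)T, \int_{[0,1]} w\cdot T)$. Evaluating at $x=0$ kills every summand with positive $x$-power or a $dx$, leaving only the pure-$(1-x)^n$ term of $R^n_{P,0}$, which gives the $T$-component $(2\pi i)^n[\Rm^{\times n}]$ up to the same $(-1)^{\binom n2}$ sign that appears in the formula for $T^n$ (this sign is exactly the $\times$-versus-$\boxtimes$ comparison sign, so it matches). Evaluating at $x=1$ kills everything but the pure-$x^n$ term, giving $[\dlog z_1 \wedge \cdots \wedge \dlog z_n] = \Omega^n$. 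The $dx$-component, $\int_{[0,1]} w \cdot T$ over all summands of $R^n_{P,1}$, is where the work lies: one integrates the polynomial coefficients $x^a(1-x)^b\,dx$ over $[0,1]$, producing Beta-function values $\frac{a!\,b!}{(a+b+1)!}$, and one must check that summing these weighted currents over all placements of the single $\log$ term and all $\Rm/\dlog$ fillings reproduces precisely $\alt_*(L^n)$, i.e. the symmetrization of $L^n = \sum_{k=0}^{n-1}(\mp)(2\pi i)^k[\Rm^{\times k}]\boxtimes[\log z_{k+1}]\boxtimes[\dlog z_{k+2}\wedge\cdots]$. Using the recursions $R^{n+1}_{P,1} = (-1)^n dx\,R^n_{P,0}\boxtimes\log + (1-x)2\pi i\,R^n_{P,1}\boxtimes\Rm + x\,R^n_{P,1}\boxtimes\dlog$ and $L^{n+1} = [\log z_1]\boxtimes\Omega^n - 2\pi i[\Rm]\boxtimes L^n$, the induction step reduces to a combinatorial identity among Beta-integrals and the averaging operator $\alt$; I would phrase it as: integrating the recursion termwise and applying the induction hypothesis $ev(R^n_P) = \alt_*(R^n_C)$ yields $ev(R^{n+1}_P)$ as a symmetrization, and one checks the coefficients match those of $\alt_*(L^{n+1})$ by a direct count of which permutations move the $\log$ into each slot.

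**Main obstacle.** The genuinely delicate part is the bookkeeping of signs and the Beta-integral coefficients in the $dx$-component: the formula for $L^n$ carries the nested signs $(-1)^{\binom{k+1}{2}}$ coming from reordering $\Rm$-factors past $\dlog$'s and $\log$'s, while $R^n_{P,1}$ carries signs $(-1)^n$ from the recursion and further Koszul signs from the exterior product convention $(w\otimes T)\boxtimes(\eta\otimes S) = (-1)^{|S||\eta|} w\wedge\eta\otimes(S\boxtimes T)$; these must conspire so that after integration the $\frac{1}{n!}\sum_{g\in S_n}$ in $\alt$ exactly absorbs the multinomial weights $\frac{a!b!}{(a+b+1)!}$. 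I expect this to work out cleanly once one observes that $\int_{[0,1]} x^k(1-x)^{n-1-k}\,dx = \frac{k!\,(n-1-k)!}{n!}$, so that the denominator $n!$ is precisely the order of $S_n$ and the numerator $k!(n-1-k)!$ counts the stabilizer of the ordered partition "$\Rm$'s before the $\log$ / $\dlog$'s after", turning the integral into an honest average over $S_n$-orbits. Making that identification rigorous — matching each summand of $\int R^n_{P,1}$ with the sum over a coset in $\alt_*(L^n)$, with signs — is the crux; everything else is the straightforward componentwise evaluation sketched above. Alternatively, since Lemma \ref{lemma:comparison_RP_and_RC_cohomologically} is already available, one could first prove the equality holds up to a boundary and then argue that both sides are canonical representatives with no room for a nonzero exact correction, but I would prefer the direct inductive computation as it also pins down the signs needed later.
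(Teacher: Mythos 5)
Your proposal is correct and is essentially the paper's own argument: compare the triples componentwise, with evaluation at $x=0$ and $x=1$ handling the first two components, and for the $dx$-component the identification of the Beta integral $\int_0^1 x^{n-k-1}(1-x)^k\,dx = \frac{k!\,(n-k-1)!}{n!}$ with the ratio $|\mathrm{Stab}(M)|/|S_n|$ appearing in $\alt$, applied to the $S_n$-orbits of the terms $M = (\Rm)^{\boxtimes k}\boxtimes\log\boxtimes\dlog^{\boxtimes(n-k-1)}$. The only difference is that the paper does this directly, partitioning the $dx$-summands of $R^n_P$ into these orbits, rather than running an induction on $n$; the inductive wrapper is harmless but unnecessary.
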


\begin{proof} 
Compare the first two and the last component of the two triples separately. 
\begin{itemize}
\item Evaluating $x = 0$ in $R^n_P$ makes all summands to zero except $(2 \pi i)^n [\Rm]^{\boxtimes n}$, which is just the first component of $R_C^n$. 
      Since this is symmetric, it is also equal to the first component of $\alt_* R_C^n$.
      Similar for the second component. 

\item Every summand of $L^n$ has the form (we omit the $(2\pi i)^k$ factor)
      \[
           M =  (\Rm)^{\boxtimes k} \boxtimes \log \boxtimes \dlog^{\boxtimes(n-k-1)}. 
      \]      
      All these terms occur as well as a summand in $R^n_P$ with some 1-form coefficient (in the variable $x$). 
      Moreover, the $S_n$-orbits of the terms $M$ (for varying $k$) form a partition of $(R^n_P)_1$ (= terms with dx).
      That is, every summand "with dx" in $R_P^n$ has the form $\omega \otimes g_* M$ for some permutation $g \in S_n$.
      We have to compare the coefficients before $g_* M$ that occur $ev(R_P^n)$ and in $\alt_* R_C^n$.       

      First note that every element in the $S_n$-orbit of $M$ has the same coefficient-form $\omega$ whose integral is
      \[
            \int_0^1 (1-x)^k x^{n-k-1} dx = \frac{k! (n-k-1)!}{n!}
      \] 
      as follows from induction -- or using properties of the beta function. 
      
      On the other hand, the coefficient before $g_* M$ in $\alt M$ is equal to
      \[
          \frac{|\text{stabilizer of }M|}{|S_n|} = \frac{k! (n-k-1)!}{n!}.
      \]           
\end{itemize}
This proves the lemma.
\end{proof}

\begin{lemma} \label{lemma:comparison_regP_and_regC}
For $Z \in z^p_\R(U,n)$ and $\Q \subset A$, one has an equality in $C_\calD(X,D,A(p))$
\begin{align*}
      ev( r_P(Z) ) = r_C( \alt Z ).
\end{align*}
Moreover, for any $A$, the first two components of $ev(r_P(Z))$ and $r_C(Z)$ are equal, that is,
\[
 ev( r_P(Z) ) = ( r_C(Z)_0, r_C(Z)_1, \text{ rest } ).
\]
\end{lemma}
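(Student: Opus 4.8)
The plan is to compute $ev(r_P(Z))$ by unwinding the definition of $r_P$, feeding in the compatibilities of $ev$ established in the proof of Lemma~\ref{lemma:comparison_rP_and_rC_cohomologically} and the precise identity $ev(R^n_P)=\alt_*(R^n_C)$ of Lemma~\ref{lemma:comparison_RP_and_RC}, and then transporting the alternation $\alt_*$ that acts on the ``coefficient current'' $R^n_C$ over to the alternation $\alt$ acting on the higher Chow cycle $Z$. The transport is an equivariance argument for the projection $\pr\colon X\times\cubeBar^n\to X$, which is $S_n$-equivariant for the trivial action downstairs; the second (integral) statement will then drop out by simply specialising the variable $x$ to $0$ and $1$, which involves no denominators.

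Since $ev$ is compatible with proper push-forward and with the exterior and intersection product against cycle classes, and $ev(\cl(U)\boxtimes R^n_P)=\cl(U)\boxtimes ev(R^n_P)$, one first rewrites
\[
  ev\big(r_P(Z)\big)=ev\,\pr_*\!\big((\cl(U)\boxtimes R^n_P)\cap\cl(Z)\big)
  =\pr_*\!\big((\cl(U)\boxtimes ev(R^n_P))\cap_0\cl(Z)\big),
\]
where the right-hand $\cl$ and $\cap_0$ are taken in $C_\calD$. Substituting $ev(R^n_P)=\alt_*(R^n_C)$ and using that the $S_n$-action on $X\times\cubeBar^n$ is trivial on the $X$-factor, so that $\alt_*$ commutes with $\cl(U)\boxtimes(-)$, reduces the right-hand side to $\pr_*\!\big(\alt_*(\cl(U)\boxtimes R^n_C)\cap_0\cl(Z)\big)$.

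The key step is to pull $\alt_*=\tfrac1{n!}\sum_{g\in S_n}g_*$ back out past the intersection. For each $g\in S_n$, viewed as a biholomorphism of $X\times\cubeBar^n$ fixing the $X$-coordinate, compatibility of $\cap_0$ with biholomorphisms and the projection formula give
\[
  \pr_*\!\big(g_*(\cl(U)\boxtimes R^n_C)\cap_0\cl(Z)\big)
  =\pr_*\,g_*\!\big((\cl(U)\boxtimes R^n_C)\cap_0 g^{*}\cl(Z)\big)
  =\pr_*\!\big((\cl(U)\boxtimes R^n_C)\cap_0\cl(g^{-1}Z)\big),
\]
using $\pr\circ g=\pr$ and $g^{*}\cl(Z)=\cl(g^{-1}Z)$. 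Since $Z\in z^p_\R(U,n)$ and the $S_n$-action preserves proper intersection with the real faces, each $g^{-1}Z$ lies again in $z^p_\R(U,n)$, so $r_C(g^{-1}Z)$ is defined and equals this last expression; averaging over $g$ and reindexing $g\mapsto g^{-1}$ yields $ev(r_P(Z))=\tfrac1{n!}\sum_{g}r_C(g^{-1}Z)=r_C(\alt Z)$ by linearity of $r_C$, which is the first assertion (and $\Q\subset A$ is used only so that $\alt Z$ makes sense). For the second assertion no rationality is needed: the first two components of $ev(w\otimes T)$ are $w(0)T$ and $w(1)T$, and specialising $x=0$ or $x=1$ commutes with $\pr^{*}$, $\cap$ and $\pr_*$, none of which involve $x$. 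Hence the first component of $ev(r_P(Z))$ is $\pr_*\!\big(\pr^{*}(R^n_P|_{x=0})\cap\cl(Z)\big)$, and by the computation in the proof of Lemma~\ref{lemma:comparison_RP_and_RC} one has $R^n_P|_{x=0}=(2\pi i)^n[\Rm]^{\boxtimes n}$, which is precisely the first component of $R^n_C$; comparing with the $\cap_0$-table shows this is the first component of $r_C(Z)$. Likewise $R^n_P|_{x=1}=\Omega^n$, the second component of $R^n_C$, giving the second component, while the third (``rest'') component is left untouched.

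I expect the main obstacle to be the bookkeeping in the third paragraph. The $S_n$-compatibility lemma proved earlier does \emph{not} apply to $r_C$, because Beilinson's product $\cap_0$ is only graded-commutative up to homotopy, so the passage from $\alt_*$ on $R^n_C$ to $\alt$ on $Z$ must be done by hand; in doing so one has to keep the signs built into the $S_n$-action on $\cubeBar^n$ consistent (so that the reindexing $g\mapsto g^{-1}$ goes through) and verify that the $(2\pi i)$-twists carried by $\cl$ and by the push-forwards agree on the two sides. Once these verifications are in place, the statement follows by assembling compatibilities that are already available.
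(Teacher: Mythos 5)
Your proposal is correct and follows essentially the same route as the paper: apply the compatibilities of $ev$ with push-forward and cycle-class products, substitute $ev(R^n_P)=\alt_*(R^n_C)$ from Lemma~\ref{lemma:comparison_RP_and_RC}, and then move the alternation from the current $R^n_C$ onto $\cl(Z)$ via compatibility of $\cap_0$ with the $S_n$-biholomorphisms, $\pr\circ g=\pr$ and $\alt_*\cl(Z)=\cl(\alt Z)$. Your third paragraph merely spells out, term by term over $g\in S_n$, the step the paper compresses into its final displayed line, and your treatment of the second assertion (specialising $x=0,1$) matches the paper's reference back to Lemma~\ref{lemma:comparison_RP_and_RC}.
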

\begin{proof}
The second statement follows immediately, as indicated in the proof of \ref{lemma:comparison_RP_and_RC}, from the definitions and is omitted. 
Using the lemma \ref{lemma:comparison_RP_and_RC}, the first statement can be proven as follows. Starting similar to the proof of lemma \ref{lemma:comparison_rP_and_rC_cohomologically}, compute 
\begin{align*}
   ev \circ r_P(Z) 
   &= \pr_* \Big( ev( \cl(U) \boxtimes R_P^n) \cap_0 \cl(Z) \Big) \\
   &= \pr_* \Big( (\cl(U) \boxtimes ev(R_P^n)) \cap_0 \cl(Z) \Big) \\
   &= \pr_* \Big( (\cl(U) \boxtimes \alt_*(R_C^n)) \cap_0 \cl(Z) \Big) \\
   &= \pr_* \alt_* \Big( ( \cl(U) \boxtimes R_C^n) \cap_0 \alt_* (\cl(Z)) \Big). 
\end{align*}
Observing that $\pr \circ \, \alt = \pr$ and $\alt_* \cl(Z) = \cl(\alt Z)$ finishes the proof.
\end{proof}

\begin{remark} Lemma \ref{lemma:comparison_RP_and_RC} can be sharpened such that $ev(R_P^n) = (\id,\id,\alt) (R_C^n)$ even with integral coefficients. Similarly, Lemma \ref{lemma:comparison_regP_and_regC} has an version for integral coefficients that says $ev(r_P(Z)) = (2 \pi i)^p \pr_* ( (\id,\id,\alt)_* (\cl(U) \boxtimes R_C^n) \cap_0 \cl(Z) )$. 
\end{remark}

%When restricted to alternating cycles however, the above diagram already commutes on chain level.
%\begin{lemma} \label{lemma:comparison_regP_and_regC}
%For $Z \in z^p_\R(U,n)$ 
%\begin{align*}
%     r_C( \alt Z ) =  ev( r_P(Z) ) = ( r_C(Z), r_C(Z), \text{ rest } ).
%\end{align*}
%
%In particular, if $Z$ is $S_n$-invariant, 
%\[
%    ev( r_P(Z) ) = r_C(Z).
%\]
%\end{lemma}
%
%\begin{proof}
%The second equality follows easily from the definitions and is omitted. 
%The first equality will follow from 
%\begin{claim}
%\[
%     ev(R^n_P) = \alt_*(R^n_C).
%\]
%\end{claim}
%Using the claim, the lemma can be proven as follows. Starting similar to the proof above, compute 
%\begin{align*}
%   ev \circ r_P(Z) 
%   &= \pr_* \Big( ev( \cl(U) \boxtimes R_P^n) \cap_0 \cl(Z) \Big) \\
%   &= \pr_* \Big( (\cl(U) \boxtimes ev(R_P^n)) \cap_0 \cl(Z) \Big) \\
%   &= \pr_* \Big( (\cl(U) \boxtimes \alt_*(R_C^n)) \cap_0 \cl(Z) \Big) \\
%   &= \pr_* \alt_* \Big( ( \cl(U) \boxtimes R_C^n) \cap_0 \alt_* (\cl(Z)) \Big). 
%\end{align*}
%Observing that $\pr \circ \, \alt = \pr$ and $\alt_* \cl(Z) = \cl(\alt Z)$ finishes the proof.
%\end{proof}

\paragraph*{Aside: Interpretation of Beilinson's product}
The quasi-isomorphism between $P_\calD$ and $C_\calD$ leads to a geometric interpretation of Beilinson's products on the latter. 

Think of an element $(a,b,c)$ in $C_\calD$ as the startpoint $a$ and endpoint $b$ of a path with $c$ the line segment connecting $a$ and $b$ (oriented from $a \to b$). 
Given two such triples, one can form the cross product of the two paths, getting a square as drawn below
\begin{equation*} 
\begin{tikzpicture}[auto, inner sep=1mm]

   %first diagram
    \begin{scope}[xshift=-70]
    %Draw points
    %\node[circle,draw=blue!50,fill=blue!20] (a) at (-1,1) {$b \tilda$};
    \node (a) at (-1,1) {};
    \node (b) at (1,1) {};
    \node (c) at (1,-1) {};
    \node (d) at (-1,-1) {};
    
    %Add labels 
    \node[left=3pt] at (a) {$b$};
    \node[below=3pt] at (c) {$\tildb$};
    \node[below=3pt] at (d) {$\tilda$};
    \node[left=3pt] at (d) {$a$};    
    
    %Connect points
    \draw [*-*] (c) to node {$\tildc$} (d);
    \draw [*-*] (d) to node {$c$} (a);      
    \end{scope}

    % leadsto-arrow between diagramms
    \draw [-triangle 90,decorate,decoration={snake,amplitude=.8mm,segment length=4mm,post length=1mm}] (-1,0) -- (0.4,0);
   
   % second diagram
    \begin{scope}[xshift=80]
    %Draw points
    \node (a) at (-1,1) {};
    \node (b) at (1,1) {};
    \node (c) at (1,-1) {};
    \node (d) at (-1,-1) {};
    
    %Connect points
    \draw [->] (a) to node {$b  \,  \tildc$} (b);
    \draw [<-] (b) to node {$c  \,  \tildb$} (c);
    \draw [<-] (c) to node {$a  \,  \tildc$} (d);
    \draw [->] (d) to node {$c  \,  \tilda$} (a);  
    
    % Label endpoints
    \node[below left]  at (d) {$a \, \tilda$};
    \node[above right] at (b) {$b \, \tildb$};
         
    \end{scope}
\end{tikzpicture}
\end{equation*}

Now there are two possible ways to extract a path from the new startpoint $a \tilda$ to the new endpoint $b \tildb$ out of this diagram. 
Each is as good as the other and one has the freedom to combine them as one wishes using a parameter $\alpha$. 

For example, give the left upper path the weight $\alpha$ and the right lower path weight $1-\alpha$. Define the combined path to be
\[
    \alpha \cdot [ c \tilda \pm b \tildc ] + (1-\alpha) \cdot [ \pm a \tildc + c \tildb]   
\]
where we decorate each horizontal line segment with a sign $\pm = (-1)^r$. 
This in turn corresponds to a triple which is exactly the $\bigcap_\alpha$ product of $(a,b,c)$ with $(\tilda, \tildb, \tildc)$.

\begin{remark}
\begin{itemize}
\item There is no satisfactory geometric explanation for this sign $(-1)^r$. Maybe it should be assigned to the endpoints $a,b$ of the vertical path because $r = \deg a = \deg b$ is the degree of these points. In fact, the sign occurs when $a$ or $b$ are passing $\tildc$.
\item With this geometric construction in mind, the associativity and grad-commutativity properties of Beilinson's products are verified easily (e.g. by looking at a cube).
\end{itemize}
\end{remark}

\section{Examples}

\subsection{Regulator of a point}
Consider the special case of $U = X = \Spec \C$ being a point and coefficients $A = \Z$.
%For this, let $Z \subset \cube^n$ be a higher Chow cycle meeting the real boundaries properly. 
The space of currents over a point identifies with the field of complex numbers $\C$; a complex number $\lambda$ corresponds to multiplication with $\lambda$ (i.e. the distribution $\lambda [\pt]$).  
%The complex $P_\calD(\pt,\Z(p))$ then becomes the set of complex valued paths $w \in \C \otimes \Lambda [x,dx]$ with $w(0) \in \Z(p)$ and $w(1) = 0$ if $p > 0$. 
The complex $P_\calD(\pt,\Z(p))$ then becomes a subset of complex valued paths $\Omega_\C(x) = \C[x , dx]$, 
\[
      P_\calD(\pt,\Z(p)) = \left\{ w \in \C [x,dx] \ \middle|\  \parbox{0.2 \linewidth}{ $w(0) \in \Z(p)$ and \\ $w(1) = 0$ if $p > 0$} \right\}.
\] 

%\paragraph{The regulator}
The regulator into $P_\calD$ then is given by maps $z_\R^p(\pt,n) \to ( \Lambda[x,dx] \otimes \C )^ {2p-n}$. 
In particular, there are a priori only two non-trivial cases to consider: $n=2p$ and $n = 2p-1$. In each other case, the regulator value of a higher Chow cycle $Z$ is zero, because the intersection of $Z$ with $R^n$ will have a degree which is too high or low to survive the integration step underlying the push forward to the point. 
The two cases $n=2p$ and $n=2p-1$ correspond to the two summands of the decomposition $R^n = R^n_0 + R^n_1$: the regulator 
%\begin{align*}
%    r_\calD(Z) &= (2 \pi i)^p \pr_* \left( R^n \cap [Z] \right) \\
%               &= (2 \pi i)^{p-n} \begin{cases} R_0^n \cap [Z], & n = 2 p  \\ R_1^n \cap [Z], & n = 2p - 1 \end{cases}
%\end{align*}
\begin{align*}
    r_P(Z) &= (2 \pi i)^p \pr_* \left( R^n \cap [ Z ] \right) 
\end{align*}
in the first case is computed by the summand $R_0^n$ of $R^n$ and in the second case by $R^n_1$. 
In both cases the intersection is seen to be zero dimensional, i.e., a sum of points (in the second case tensored with $dx$). Thus the push forward is just the sum of the coefficients of these points, multiplied with the relative dimension $(2 \pi i)^{-n}$. 

In the first case only two types of currents occur. One can use shuffles to sort them and write
\begin{align*}
   R_0^n \cap [ Z ] 
     &= \sum_i \sum_{\sigma \in \Sh(i,n-i)} (-1)^{|\sigma|} (2\pi i)^{i-p} (1-x)^i x^{n-i} \sigma_* \left( {\Rm}^{\boxtimes i} \boxtimes \dlog^{\boxtimes n-i} \right) \cap [Z].     
\end{align*}
Note that the summation index starts at $i=p$ since otherwise the resulting current lies in the $n-i+p > n$-th part of the Hodge filtration and thus is zero. 

In the second case in addition a single $\log$ term shows up. An explicit formula is complicated to write down\footnote{It is something like 

$\sum \limits_{\genfrac{}{}{0pt}{1}{i = 0...n-1,}{j=1..i+1 \ \ }} \sum \limits_{\sigma \in \Sh(i,n-i)} (-1)^{|\sigma|+\epsilon} (2\pi i)^{i-p+1} (1-x)^i x^{n-i} dx \otimes \sigma_* \left( \Rm \boxtimes \ldots \underset{j}{\log} \ldots \boxtimes \Rm \boxtimes \dlog \boxtimes \ldots \boxtimes \dlog \right) \cap [Z]$
where in the sum the currents $\Rm$ and $\dlog$ occur $i$ resp. $n-i-1$ times.}, so we explain the result informally. It is, basically, the intersection of $Z$ with all possible currents of the form
%$\sigma_* \Big( \log(z_1) \dlog (z_2) \wedge \ldots \wedge \dlog(z_i) \times (\Rm)^{n-i} \Big)$ 
$\sigma_* \left( \dlog \boxtimes \ldots \underset{j}{\log} \ldots \boxtimes \dlog \boxtimes \Rm \boxtimes \ldots \boxtimes \Rm \right)$, with $\sigma$ shuffling the forms and the $\Rm$ together.

The Deligne cohomology of a point is easy to describe since the 3 term complex is simply 
\begin{align*}
     C_\calD^\bullet(\pt, \Z(p)) 
     &= ( \Z(p) \oplus F^p \C \to \C )\\
     &\cong \begin{cases} \Z   & p = 0, \\ 
                          \C / \Z [-1]  & p > 0. 
            \end{cases}
\end{align*}
%
%The Deligne cohomology of a point is easy to describe and in fact it can directly be computed as 
%\[
%     H_\calD^\bullet (\pt,\Z(p)) \cong  \begin{cases} ( \Z \to 0 ) & p = 0 \\ ( 0 \to \C/ \Z(p))  & p > 0. \end{cases}
%\]
%%The regulator into Deligne cohomology can, on the level of chains, be computed by 
The regulator map into this complex can be expressed on the level of chains as the map 
\begin{align*}
\calN_\R^\bullet(\pt,p) = z^p_\R(\pt, 2p-\bullet) \xrightarrow{r_P} P^{\bullet} \left(\pt, \Z(p) \right) \xrightarrow{ev} \begin{cases} \Z  & p = 0, \\ (\Z(p) \to \C)  & p > 0. \end{cases}
\end{align*}
%\begin{align*}
%z_\R^p(\pt,\bullet) \xrightarrow{r_\calP} P^{2p-\bullet} \left(\pt, \Z(p) \right) \xrightarrow{ev} \begin{cases} \Z , & p = 0 \\ (\Z(p) \to \C) , & p > 0. \end{cases}
%\end{align*}
This map is determined by sending for $p = \bullet = 0$ an element $k \cdot [\pt]$ to $k \in \Z$, and for $p>0$ sending a cycle $Z$ to $\left( r_P(Z)|_{x = 0}, \int_0^1 r_P(Z) \right)$. 
In each other case, the map is zero.
Passing to cohomology groups and taking into account that $\calN_\R^\bullet$ computes motivic cohomology, one obtains the induced regulator 
\[
    r_P : H_M^n(\pt,\Z(p))    \longrightarrow  \begin{cases} \Z & p=0, n=0, \\ 
                                                             \C / \Z(p) & p > 0, n = 2p-1, \\
                                                             0  & \text{else. }
                                              \end{cases} 
\]
given by 
\[
\begin{array}{cccl}
    k \cdot \pt & \mapsto & k                 & \quad p=0, n=0, \\
    Z           & \mapsto & \int_0^1 r_P(Z)   & \quad p > 0, n=2p-1. 
\end{array} 
\]

For $n=p=1$ one obtains (see the formula in the next subsection) the logarithm map 
\[
   r_P = \log : H^1_M(\pt,\Z(1)) = \C^* \to \C/2 \pi i \Z.
\]
%\[
%    r : \CH^p_\R(\pt, n)    \xrightarrow{}  \begin{cases} \Z & p=0 \\ (0 \to \C / \Z(p)), & p > 0 \end{cases} 
%\]
%is given by
%\[
%\begin{array}{cccl}
%    k \cdot \pt & \mapsto & k,                                 & \quad p=0, n=0 \\
%    Z           & \mapsto & \left( 0, \int_0^1 r(Z) \right),   & \quad p > 0, n=2p-1 \\
%    Z           & \mapsto & 0,                                 & \quad \text{else. }
%\end{array} 
%\]
% 
% 

\subsection{Formulas for $n \leq 3$}
We now calculate some low-dimensional examples of the regulator map and their images in the 3-term-complex under the evaluation map $ev$.
They are easily read off from the computation of $R^n$ in section \ref{sec:regulator_to_holim}. 
\begin{itemize}
\item 
First of all, consider the case where $n = 0$. Here $r_P$ reduces to the cycle map from the usual Chow groups to the Deligne complex $P_\calD$. Composition with $ev$ gives the cycle map into $C_\calD$.

\item
For $n=1$ the regulator $r_P(Z)$ is the push-forward to $X$ of 
%\[
%     (1-x) (2 \pi i)^p [X \times \R^-] \cdot [Z] + x (2 \pi i)^{p-1} [ \dlog z ] \cdot [Z] + dx (2 \pi i)^{p-1} [ \log z ] \cdot [Z].
%\]
\[
     (1-x) (2 \pi i)^{p+1} [U \times \Rm] \cap [Z] + x (2 \pi i)^{p} [ \dlog z ] \cap [Z] + dx (2 \pi i)^{p} [ \log z ] \cap [Z].
\]
After writing the push forward as a fiber integral over $X$ twisted by $(2\pi i)^{-1}$, this becomes (with $z$ the coordinate in $\cube$)
\[
     (1-x) (2 \pi i)^p \int_{(U \times \Rm) \cap Z} + x (2 \pi i)^{p-1} \int_{Z}  \dlog z + dx (2 \pi i)^{p-1} \int_{Z} \log z.
\]
In all these formulas one has to exclude the poles of the integrand from the integration domain, that is the two last integrals are over $Z_\reg \setminus (U {\times} \{0,\infty\})$ and $Z_\reg \setminus (U {\times} \Rm)$ respectively.

In particular, the regulator into $P_\calD$ contains for $n=1$ no new information compared to the regulator in the 3-term complex, which is given by the triple of currents on $X$
\[
     (2\pi i)^{p-1} \left( 2 \pi i \int_{(X \times \Rm) \cap Z} , \int_{Z} \dlog z , \int_{Z} \log z \right).
\]

Important\footnote{Using the Gersten resolution for higher Chow groups \cite[§10]{BlochAlgebraicCycles}, the degeneration of the local to global spectral sequence for higher Chow groups \cite[§5]{MuellerStach_AlgebraicCycleComplexes}, and the Milnor-Chow homomorphism \cite{Totaro}, one can see that in fact each class in $\CH^p(U,1)$ can be represented by a sum of such graphs.}  cycles in $z^p(U,1)$ are the cycles represented by graphs $\Gamma_f$, where $f$ is a non-zero rational function, defined on a codimension $p-1$ algebraic subvariety $V \subset U$. 
%
%The higher Chow group $CH^p(U,1)$ is generated\footnote{More precisely, one need sums of such tuple whose sum of divisors is zero} by (the graphs of) tuples $(V,f)$ where $V$ is a codimension $p-1$ algebraic subvariety and $f$ is a non-zero rational function on $V$. 
The regulator of such an element is 
\[
    r_C( \Gamma_f ) = (2\pi i)^{p-1} \left( 2 \pi i [f^{-1}(\Rm)] , \int_{V} \dlog f , \int_{V} \log f \right).
\]

\item
For $n=2$ one gets a symmetrization of (a $\C$ version of) a formula found by Beilinson in \cite{BeilinsonHigherRegulatorsAndValuesOfLFunctionsOfCurves}. 
The regulator value of $Z \in z^p(X,2)$ in the complex $P_\calD$ is: 
\begin{align*}
   &- (2 \pi i)^{p} (1-x)^2 \int_{Z \cap (X \times \Rm \times \Rm)} 
    + (2 \pi i)^{p-2} x^2 \int_Z \dlog z_1 \wedge \dlog z_2 \\
   &+ (2 \pi i)^{p-1} x (1-x) \left( \int_{Z \cap (X \times \Rm \times \cubeBar)} \dlog z_2 - \int_{Z \cap (X \times \cubeBar \times \Rm)} \dlog z_1 \right) \\
   &+ (2 \pi i)^{p-2} x dx \left( \int_Z \log z_1 \dlog z_2 - \int_Z \log z_2 \dlog z_1 \right) \\
   &+ (2 \pi i)^{p-1} (1-x) dx \left( \int_{Z\cap(X \times \cubeBar \times \Rm)} \log z_1 - \int_{Z \cap (X \times \Rm \times \cubeBar)} \log z_2 \right).   
\end{align*}
In the total complex its first two components are the currents
\begin{equation*}
   -(2 \pi i)^{p} \int_{(X \times \Rm \times \Rm) \cap Z}, \qquad \qquad 
    (2 \pi i)^{p-2} \int_{Z} \dlog z_1 \wedge \dlog z_2 
\end{equation*}
and its third component is 
\begin{equation*}
    \frac{(2 \pi i)^{p-2} }{2} \left( \int_{Z} \log z_1 \dlog z_2 - \log z_2 \dlog z_1 + 2 \pi i \int_{ Z \cap \{ z_2 \in \Rm\}  }  \log z_1 - 2 \pi i \int_{Z \cap \{ z_1 \in \Rm\}} \log z_2 \right). 
\end{equation*}
The regulator value of $Z$ in the $3$ term complex on the other side is 
\[
(2 \pi i)^{p-2} \Big( - (2 \pi i)^2 \int_{Z \cap \{z_1,z_2 \in \Rm\}},  \int_Z \dlog z_1 \wedge \dlog z_2, \int_Z \log z_1 \dlog z_2 - 2 \pi i \int_{Z \cap \{ z_1 \in \Rm\}} \log z_2 \Big).
\]
\item
Examining the case $n=3$, we consider only the evaluation of the regulator values in the total complex. This is 
%\[
% (2 \pi i)^{p-3} \left( (2 \pi i)^3 [\Rm \times \Rm \times \Rm] , [\dlog] \times [\dlog] \times [\dlog] , R_Z \right)
%\]
\[
 (2 \pi i)^{p-3} \Big( -(2 \pi i)^3 \int_{Z \cap (X \times (\Rm)^3)} 
                       \,,\, \int_{Z} \dlog z_1 \wedge \dlog z_2 \wedge \dlog z_3 
                       \,,\, R_Z 
                 \Big)
\]
where $R_Z$ is the current given by %(using that the term $\Rm$ should be at the tail to allow integration)
%\begin{align*}
%-\scriptfrac{1}{3} \int_{Z_\reg} \log z_1 \dlog z_2 \dlog z_3 - \log z_2 \dlog z_1  \dlog z_3 + \log z_3 \dlog z_1 \dlog z_2 \\
%-\scriptfrac{2 \pi i}{6} \int_{Z_\reg \cap \{ z_1 \in \R^-\}} \log z_2 \dlog z_3 - \dlog z_2 \log z_3 \\
%-\scriptfrac{2 \pi i}{6} \int_{Z_\reg \cap \{ z_2 \in \R^-\}} \dlog z_1 \log z_3 - \log z_1 \dlog z_3 \\
%-\scriptfrac{2 \pi i}{6} \int_{Z_\reg \cap \{ z_3 \in \R^-\}} \log z_1 \dlog z_2 - \dlog z_1 \log z_2 \\
%-\scriptfrac{(2 \pi i)^2}{3} [\log \Rm \Rm - \Rm \log \Rm + \Rm \Rm \log ] \cdot [Z]
%\end{align*}
\begin{align*}
\scriptfrac{1}{3} \int_{Z} \log z_1 \dlog z_2 \dlog z_3 - \log z_2 \dlog z_1  \dlog z_3 + \log z_3 \dlog z_1 \dlog z_2 \\
+\scriptfrac{2 \pi i}{6} \int_{Z \cap \{ z_1 \in \Rm\}} \log z_3 \dlog z_2 - \log z_2 \dlog z_3 \\
+\scriptfrac{2 \pi i}{6} \int_{Z \cap \{ z_2 \in \Rm\}} \log z_1 \dlog z_3 - \log z_3 \dlog z_1 \\
+\scriptfrac{2 \pi i}{6} \int_{Z \cap \{ z_3 \in \Rm\}} \log z_2 \dlog z_1 - \log z_1 \dlog z_2 \\
-\scriptfrac{(2 \pi i)^2}{3} \Big[ \int_{Z \cap \{ z_1, z_2 \in \Rm\}} \log(z_3) - \int_{Z \cap \{ z_1, z_3 \in \Rm \}} \log(z_2) + \int_{Z \cap \{ z_2, z_3 \in \Rm\}} \log(z_1) \Big].
\end{align*}
%%%% Alternative für die letzte Zeile
%+\scriptfrac{(2 \pi i)^2}{3} [\log \Rm \Rm - \Rm \log \Rm + \Rm \Rm \log ] \cap [Z].
To make this more concrete, one can apply this formula to the cycle $C(1)$ considered by Burt Totaro in \cite[\textsection 2]{Totaro}, which by definition is the algebraic cycle in $\cube^3$ parametrized by 
\[ 
   \varphi(t) = (t,1-\scriptfrac{1}{t},1-t), \qquad t \in \P^1 \setminus \{ 0,1,\infty\}.
\]
One gets that the first and the last row vanishes, and each other term becomes $\pi^2/6$. Thus
\begin{align*}
   \int r_P(C(1)) = \scriptfrac{\pi^2}{6} = \Li_2(1)
\end{align*}
is a special value of the dilogarithm function. 
\end{itemize}

\begin{remark*}
The importance of $C(1)$ lies in the fact, that $\frac{\pi^2}{6}$ has order $24$ in $\C/\Z(2) = \C/4 \pi \Z$ and hence $C(1)$ is a 24-torsion element in $\CH^2(\pt,3)$.
\end{remark*}

\subsection{The general Totaro cycle in $\cube^3$}
We continue the above example of the Totaro cycle and consider, following \cite{KLM_AbelJacobi}, more general for a parameter $ a \in \P^1(\C) \setminus (\R_{\leq 0} \cup \R_{\geq 1})$ the cycles 
\[
     C(a) = \left\{ (z,1-\scriptfrac{a}{z},1-z) :  z \in \P^1(\C) \right\} \cap \cube^3.
\]
They have Bloch boundary $\partial C(a) = - (a,1-a)$, and thus are higher Chow cycles if and only if $a \in \{0,1\}$.
Nevertheless, for $a$ as above, the $C(a)$ intersect the real boundaries properly and we can calculate the regulator for such $a$. We use the formulas from above and see that, noting that the first (by dimensionality) and the last (by choice of $a$) row vanishes, we have to compute
\begin{align*}
  A &= \int_{-\infty}^0 \log (1-z) \dlog (1-a/z) - \int_{-\infty}^0 \log(1-a/z) \dlog (1-z) \\
  B &= \int_{0}^a \log z \dlog (1-z) - \int_0^a \log (1-z) \dlog z \\
  C &= \int_{\infty}^1 \log (1-a/z) \dlog z - \int_{\infty}^1 \log z \dlog (1-a/z).  
\end{align*}
Integration by parts reduces each of the above pairs of integrals to a single integral and a limit. Evaluation with mathematica gives 
\begin{align*}
  A &= 2 \int_{-\infty}^0 \log (1-z) \dlog (1-\scriptfrac{a}{z}) 
       - \lim_{h \nearrow 0} \Big[ \log(1-\scriptfrac{a}{h}) \log (1-h) - \log(1-a h) \log (1-\scriptfrac{1}{h}) \Big] \\
   &= 2 Li_2(a) + 2 \log(a) \log(1-a).
\intertext{Similar for $B$,$C$:}
  B &= -2 \int_0^a \log (1-z) \dlog z + \lim_{h \nearrow 1} \left[ \log(a h) \log(1-a h)) \right] - \lim_{h \searrow 0} \left[ \log(h) \log(1-h) \right] \\
    &= 2 Li_2(a) + \log(a) \log(1-a) \\[20pt]
  C &= 2 \int_{\infty}^1 \log (1-a/z) \dlog z  - \lim_{h \nearrow \infty} \left[ \log(1-\scriptfrac{a}{1-\scriptfrac{1}{h}}) \log(1-\scriptfrac{1}{h}) - \log(1-\scriptfrac{a}{h}) \log(h) \right] \\
    &= 2 Li_2(a).
\end{align*} %\scriptfrac{a}{1-\scriptfrac{1}{h}}  %a/(1-\scriptfrac{1}{h}
Thus the regulator in this case is 
\begin{align*}
   r_P (C(a)) 
     &= (2\pi i)^{2-3} \cdot (2\pi i) x (1-x) dx (A + B + C)  \\
     &= x (1-x) dx \big( 6 Li_2(a) + 3 \log(a) \log(1-a) \big).
\end{align*}
In the 3-term complex, this becomes $(0,0,Li_2(a) + \scriptfrac{1}{2} \log(a) \log(1-a))$. 
For $a \to 1$ this reduces to the value already computed. 
The KLM regulator on the other side, when applied to the above cycle, can easily computed to be 
\[
    r_{C,\cap_0} (C(a)) = (0,0,Li_2(a) + \log(a) \log(1-a)).
\]
In particular, the two regulators are not equal. 

\begin{remarks}
%\item In this example all regulator values are equal ($=Li_2(a)$) up to polylogarithms of lower order. 
\item Following KLM \cite{KLM_AbelJacobi}, one can also consider the curve $D(b) := \{(1-z,1-b/z,z) \} \cap \cube^3$. 
      Then $C(a) - D(1-a)$ is a higher Chow cycle. 
      Since $D(b)$ is obtained from $C(b)$ by exchanging the first two components, the resulting regulator is $r_P(D(b)) = - r_P(C(b))$. 
      Thus we have, using the transformation rules for the dilogarithm \citep[(3.3)]{Maximon_complex_dilogarithm},
      \begin{align*}
         r_P(C(a)-D(1-a)) 
           &= r_P(C(a)) + r_P (D(1-a)) \\
           &=  6 x (1-x) dx Li_2(1).
      \end{align*}
      This means that the error terms cancel. 
      The same holds for the regulator in the total complex and in fact, one has $ev (r_P(C(a)-D(1-a)) )= r_{C,\cap_0}(C(a)-D(1-a))$.
\end{remarks}

\section{The Abel-Jacobi map}
%This section shows how the regulator $r_P$ gives rise to an Abel-Jacobi map from the group of higher Chow cycles homologous to zero to some intermediate Jacobian. This construction is analogous to the one in \cite{KLM_AbelJacobi} for the regulator $r_C$.
%
%We assume that $U = X$ is projective, that is, $D=0$. Furthermore we omit $D$ from the notation and write $P_\calD(X,\Z(p))$ for $P_\calD(X,D,\Z(p))$ etc. 
%
%\paragraph{Cycles homologous to zero}
%Following \cite{KLM_AbelJacobi}, we say that a higher Chow chain $Z \in z^p_\R(X,n)$ is homologous to zero, if $\pr \circ r_C(Z)$ is a boundary in $\calI^{2p-n}(X,\Z(p)) \oplus F^p \calD^{2p-n}(X,\C)$, where $\pr$ denotes the projection from $C_\calD$ onto it's first two components. 
%%
%By lemma \ref{lemma:comparison_regP_and_regC}, this is equivalent to saying that $Z$ lie in the kernel of the composition $\pr \circ ev \circ r_P$.
%
%
%
%Written as a diagram, the chains homologous to zero are exactly those chains that become boundaries in the rightmost term of the diagram below
%\[ 
%\xymatrix{
%    & P_\calD^{2p-n}(X,\Z(p)) \ar[d]^{ev} \\
%    z^p_\R(X,n) \ar[r]^{r_C} \ar[ru]^{r_P}  & \  C_\calD^{2p-n}(X,\Z(p))  \ar[r] & \calI^{2p-n}(X,\Z(p)) \oplus F^p \calD^{2p-n}(X,\C). 
%}
%\]
%
%
This section shows how the regulator $r_P$ gives rise to an Abel-Jacobi map from the group of higher Chow cycles homologous to zero to some intermediate Jacobian. This construction is analogous to the one in \cite{KLM_AbelJacobi} for the regulator $r_C$.

We assume that $U = X$ is projective, that is, $D=0$. Furthermore we omit $D$ from the notation and write $P_\calD(X,\Z(p))$ for $P_\calD(X,D,\Z(p))$ etc.

\paragraph{Cycles homologous to zero}
Following \cite{KLM_AbelJacobi}, we say that a higher Chow chain $Z \in z^p_\R(X,n)$ is homologous to zero, if $\proj \circ r_C(Z)$ is a boundary in $\calI^{2p-n}(X,\Z(p)) \oplus F^p \calD^{2p-n}(X,\C)$, where $\proj$ denotes the projection from $C_\calD$ onto it's first two components. 
By lemma \ref{lemma:comparison_regP_and_regC}, this is equivalent to saying that $Z$ lie in the kernel of the composition $\proj \circ ev \circ r_P$.

Written as a diagram, the chains homologous to zero are exactly those chains that become boundaries in the rightmost term of the diagram below
\[ 
\xymatrix{
    & P_\calD^{2p-n}(X,\Z(p)) \ar[d]^{ev} \\
    z^p_\R(X,n) \ar[r]^-{r_C} \ar[ru]^{r_P}  & \  C_\calD^{2p-n}(X,\Z(p))  \ar[r]^-{\proj} & \calI^{2p-n}(X,\Z(p)) \oplus F^p \calD^{2p-n}(X,\C). 
}
\]

The set of all higher Chow chains homologous to zero form a subgroup of the higher Chow chains, denoted by 
\[
    z^p_{\R,\hom}(X,n).
\]

If $Z$ is a boundary, then $Z$ is automatically homologous to zero. Hence this notion makes sense on cohomology classes and we define $\CH_{\hom}^p(X,n)$ to be the cohomology classes represented by cycles homologous to zero. This is the kernel of the composition 
%\[
%     \CH^p(X,n) \to H^{2p-n} P_\calD^\bullet(X,\Z(p)) \to H^{2p-n} C_\calD^\bullet(X,\Z(p)) \to  H^{2p-n}(X,\Z(p)) \oplus F^p H^{2p-n}(X,\C).
%\]
%Say that a higher Chow cycle $Z \in z_\R^p(X,n)$ is homologous to zero, if it lies in the kernel of the composition 
\[ 
    \CH^p(X,n) \to H_\calD^{2p-n}(X,\Z(p)) \to  H^{2p-n}(X,\Z(p)) \oplus F^p H^{2p-n}(X,\C)
\]
that is induced by the regulator into the total complex followed by the projection onto the first two components.

The notion of being ''homologous to zero'' depends on the regulator map. The following lemma gives a criteria for checking this property (for $\partial_B$ closed cycles) without explicitly mentioning the regulator.

\begin{lemma} % \label{lemma:homologous_to_zero}
Let $Z \in z^p_\R(X,n)$ be an higher Chow cycle, i.e. $\partial_B Z = 0$. Then 
\begin{center}
$Z$ is homologous to zero if and only if $(\pr_X)_* ([X \times (\Rm)^{\times n}] \cap [Z])$ is zero in $H^{2p-n}(X,\Z)$.
\end{center} 
\end{lemma}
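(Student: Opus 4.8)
The plan is to unwind the definition of ``homologous to zero'' and identify the relevant piece of the regulator. By Lemma~\ref{lemma:comparison_regP_and_regC}, the notion of being homologous to zero — namely that $\proj \circ r_C(Z)$ is a boundary in $\calI^{2p-n}(X,\Z(p)) \oplus F^p \calD^{2p-n}(X,\C)$ — can be read off from $\proj \circ ev \circ r_P$, but in any case it is governed solely by the first two components of $r_C(Z)$, which according to the $n=2$ and general formulas are $T_Z = (2\pi i)^{p} (\pr_X)_*\big( ([X]\boxtimes T^n) \cap [Z] \big)$ (up to the global $(2\pi i)^{-n}$ twist) and $\Omega_Z = (2\pi i)^{p-n} (\pr_X)_*\big( ([X]\boxtimes \Omega^n) \cap [Z]\big)$. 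Since $T^n = (-1)^{\binom n2}(2\pi i)^n [(\Rm)^{\times n}]$, the first component is, up to sign and a unit, exactly $(\pr_X)_*\big( [X\times(\Rm)^{\times n}] \cap [Z]\big)$, the current appearing in the statement.

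First I would argue that when $\partial_B Z = 0$ the first component $T_Z$ is a $d$-closed integral relative current, hence represents a class in $H^{2p-n}(X,\Z)$ (here I use that $r_C$ is a map of complexes, so $d\,r_C(Z) = r_C(\partial_B Z) = 0$, together with the fact that the differential on $C_\calD$ restricts on the first component to the differential on $\calI$). Next, the key reduction: ``$Z$ homologous to zero'' means $\proj\circ r_C(Z) = (T_Z,\Omega_Z)$ is a boundary in $\calI^{2p-n} \oplus F^p\calD^{2p-n}$. I would observe that the second component $\Omega_Z \in F^p\calD^{2p-n}(X,\C)$ is automatically $d$-closed and, for a higher Chow cycle, always represents the $F^p$-part of the same cohomology class as $T_Z$ under the comparison $H^*(\calI)\otimes\C \cong H^*(\calD)$; this is the standard Hodge-theoretic fact that the Deligne cycle class of an algebraic cycle is compatible with its topological class and lies in $F^p$. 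Consequently $(T_Z,\Omega_Z)$ is a boundary in the direct sum if and only if $T_Z$ alone is a boundary in $\calI^{2p-n}(X,\Z(p))$, i.e.\ if and only if its class in $H^{2p-n}(X,\Z(p)) \cong H^{2p-n}(X,\Z)$ (the Tate twist being an isomorphism of abelian groups) vanishes.

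The main obstacle I anticipate is precisely this last point: showing that the $F^p$-component $\Omega_Z$ carries no information beyond $T_Z$, i.e.\ that $(T_Z,\Omega_Z)$ being a boundary in the direct sum is equivalent to $T_Z$ being a boundary on its own. This requires knowing that the $d$-closed current $\Omega_Z = (\pr_X)_*\big(([X]\boxtimes[\dlog z_1 \wedge\cdots\wedge \dlog z_n])\cap[Z]\big)$ lies in $F^p$ and represents the image of $[T_Z]\otimes 1$ under $H^{2p-n}(X,\Z)\otimes\C \to H^{2p-n}(X,\C)$; this is exactly the statement that the first two components of $r_C(Z)$ glue to a genuine cohomology class, which holds because $r_C(Z)$ itself is a cocycle in $C_\calD$ (again using $\partial_B Z = 0$) and the projection $C_\calD \to \calI \oplus F^p\calD$ is a map of complexes. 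Granting that, if $T_Z$ is a $\Z$-boundary then it is $d$ of an integral relative current $\gamma$, and $\Omega_Z$ differs from the $F^p$-form $d(\text{something})$ by a $d$-exact $F^p$ form, so one can lift the boundedness to the direct sum; conversely a boundary in the direct sum projects to a boundary of $T_Z$. Once this equivalence is in place, combined with the identification of $T_Z$ (up to a nonzero scalar) with $(\pr_X)_*\big([X\times(\Rm)^{\times n}]\cap[Z]\big)$, the lemma follows.
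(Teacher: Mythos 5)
Your reduction is the right one, and it matches the paper's strategy up to the decisive step: you identify the first component of $r_C(Z)$ with the current in the statement, you note that one direction is trivial, and you isolate the real issue, namely that vanishing of $[T_0]$ in $H^{2p-n}(X,\Z)$ must force the second component to be a boundary \emph{inside} $F^p\calD^{2p-n}(X,\C)$, i.e. of the form $dS_1$ with $S_1\in F^p\calD^{2p-n-1}(X)$. But the argument you give for this step does not close. From $\partial_B Z=0$ the cocycle relation in the cone complex gives $\Omega_Z - T_0 = d(\text{rest})$, and if $T_0=d\gamma$ with $\gamma$ integral, you only obtain $\Omega_Z = d(\gamma+\text{rest})$ with a primitive that is a completely general current, \emph{not} one in $F^p$. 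Your sentence ``$\Omega_Z$ differs from the $F^p$-form $d(\text{something})$ by a $d$-exact $F^p$ form'' presupposes exactly what has to be proved: that exactness of the $F^p$-current $\Omega_Z$ in $\calD^\bullet(X)$ implies exactness in the subcomplex $F^p\calD^\bullet(X)$. Likewise, the ``standard Hodge-theoretic fact'' you invoke (compatibility of the cycle class with the topological class, and membership in $F^p$) gives information about the class of $\Omega_Z$ in $H^{2p-n}(X,\C)$, not about the existence of a bounding current of the correct Hodge type.

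The missing ingredient is precisely the analytic input the paper uses: since $X$ is projective, the $d'd''$-lemma (cited there from Gillet--Soul\'e, equivalently strictness of $d$ for the Hodge filtration, i.e. $E_1$-degeneration, so that $H^n(F^p\calD^\bullet(X))\to H^n(X,\C)$ is injective) guarantees that a $d$-closed current in $F^p$ which is exact as a current admits a primitive in $F^p\calD^{2p-n-1}(X)$. With that lemma the chain is: $[T_0]=0$ over $\Z(p)$ $\Rightarrow$ $[T_0]=0$ over $\C$ $\Rightarrow$ $[\Omega_Z]=0$ in $H^{2p-n}(X,\C)$ (by the cocycle relation) $\Rightarrow$ $\Omega_Z$ bounds in $F^p\calD$, which is what ``homologous to zero'' demands. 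Without invoking some form of this strictness statement your equivalence ``$(T_0,\Omega_Z)$ bounds in the direct sum iff $T_0$ bounds integrally'' is unproved, so as written the proposal has a genuine gap at its central step.
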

\begin{proof}
Write the regulator of $Z$ as 
\[
     r_C(Z) = (T_0,T_1, \text{rest}).
\]
Note that the current in the statement of the lemma is just $\pm (2 \pi i)^p T_0$. 
Thus it is to show that 
\[
    T_0 = 0 \text{ in } H^{2p-n}(X,\Z(p)) \iff \begin{cases} T_0 = 0 \text{ in } H^{2p-n}(X,\Z(p)) \\ T_1 = 0 \text{ in } F^p H^{2p-n}(X,\C) \end{cases} 
\]
It suffices to show that if $T_0$ is a boundary in cohomology with $\Z(p)$ coefficients, then $T_1 = 0$ in $F^p H^{2p-n}(X,\C)$. 
Since $Z$ is assumed to be $\partial_B$-closed, $T_1 = T_0 + d( \text{rest} )$ in $H^{2p-n}(X,\C)$. 
If $T_0$ is a boundary in $\Z(p)$-valued cohomology, then also with $\C$ coefficients. Thus $T_1$ is a boundary in $H^{2p-n}(X,\C)$. By the $d' d''$-lemma \cite[1.2.1]{GilletSoule_Arithmetic_Intersection_Theory} the bounding current can be choosen in $F^p \calD^{2p-n-1}(X)$ and hence $T_1$ is also a boundary in $F^pH^{2p-n}(X,\C)$.
%But $T_1$ lies in the image of the map
%\[
%   F^p H^{2p-n}(X,\C) \to H^{2p-n}(X,\C) 
%\]
%which is injective by ...... In particular, $T_1$ is also a boundary in $F^pH^{2p-n}(X,\C)$.
\end{proof}

\paragraph{The Abel-Jacobi map}
Every total complex gives rise to a long exact sequence on cohomology. In particular, there is a long exact sequence associated to the total complex $C_\calD(X,\Z(p))$. From this long exact sequence one can extract the exact sequence 
\[
     0 \to J^{p,n}(X) \to H^{2p-n}C_\calD^\bullet(X,\Z(p)) \xrightarrow{\proj} H(X,\Z(p)) \oplus F^p H(X,\C)
\]
where 
\[
 J^{p,n}(X) := \frac{H^{2p-n-1}(X,\C)}{H^{2p-n-1}(X,\Z(p)) + F^p H^{2p-n-1}(X,\C)}
\]
and the map into the 3-term complex is induced by $T \mapsto (0,0,T)$.
For any cycle homologous to zero, the regulator values $r_C(Z)$ and $ev \circ r_C(Z)$ both lie in the kernel of the projection, hence come from an element in $J^{p,n}$. 
This uniquely defines the Abel-Jacobi maps with respect to the regulators $r_P$ and $r_C$. The construction is summarized by the diagram 
%\[
%\xymatrix{
%    z^p_{\hom}(X,n) \ar[d]^{AJ_P} \ar[r]^{r_P}  &   H^{2p-n} P_\calD(X,\Z(p)) \ar[d]^{ev} \\ 
%    J^{p,n}(X) \ \ar@{^(->}[r]^-{i} & H_\calD^{2p-n}(X,\Z(p))
%}
%\]
\[
\xymatrix{
   \CH^p_{\hom}(X,n) \ \ar@{^(->}[r] \ar@<2pt>[dd]^{AJ_P} \ar@<-2pt>[dd]_{AJ_C} & \CH^p(X,n) \ar[dd]^{r_C} \ar[rd]^{r_P} \\
   & & H^{2p-n}P_\calD(X,\Z(p)) \ar[ld]_{ev} \\
   J^{p,n}(X) \ \ar@{^(->}[r] & H^{2p-n}C_\calD(X,\Z(p)) 
}
\]
Note that although their projection onto the first two components $\proj \circ r_C(Z) = \proj \circ r_P(Z)$ is the same, the values of $r_C(Z)$ and $ev \circ r_P(Z)$ in general are different, hence give rise to different Abel-Jacobi maps. 

\paragraph*{Explicit formulas}
The Abel-Jacobi map can be made explicit after observing that the image of $J^{p,n}(X)$ consist of those triples where at most the third component is $\neq 0$. 

Thus the general procedure to construct the Abel-Jacobi map from a regulator value in the 3-term complex is to: First use that $Z$ is homologous to zero to move all information into the third component by adding a boundary. Then project to the third component and take a quotient to be independent of the choices made for the boundary. 

In case of the regulator $r_C$, write the regulator value of a higher Chow chain $Z$ as 
\[
     r_C(Z) = (T_0, T_1, r_C(Z)_3 ). 
\]
If $Z$ is homologous to zero, $T_0 = d S_0$ and $T_1 = d S_1$ for some $S_0 \in \calI^{2p-n-1}(X,\Z(p))$ and $S_1 \in F^p \calD^{2p-n-1}(X)$. 
Now the regulator value is equivalent to 
\begin{align*}
    r_C(Z) =  (T_0, T_1, r_C(Z)_3 ) - d (S_0, S_1, 0) = (0,0, r_C(Z)_3 - S_1 + S_0).
\end{align*} 
The resulting Abel-Jacobi map is given by 
\[
     AJ_C(Z) := r_C(Z)_3 + S_0 - S_1.
\] 
%where $r_C(Z)_3$ is the third component the regulator value and $S_0$,$S_1$ bound the first resp. second components of the regulator. 

Similarly, write 
\[
    ev \circ r_P(Z) = (T_0, T_1, \int_0^1 r_P(Z)_1 ). 
\]
Note that by lemma \ref{lemma:comparison_regP_and_regC}, $T_0,T_1$ are exactly the equally named currents as above. Thus we can choose the same boundaries $S_0,S_1$ as before and obtain that the Abel-Jacobi map with respect to the regulator $r_P$ can be described by the formula
\begin{align} \label{eq:abel_jacobi_into_P}
     AJ_P(Z) :=  \int_0^1 r_P(Z)_1 + S_0 - S_1.
\end{align}

\begin{remark}  \label{rmk:AJ_for_n_greater_p}
For $n \geq p$ one has $F^p \calD^{2p-n-1}(X) = 0$. Hence the Abel-Jacobi map lands in $\frac{H^{2p-n-1}(X,\C)}{H^{2p-n-1}(X,\Z(p))}$, computed by the formula \eqref{eq:abel_jacobi_into_P} with $S_n$ set to zero. 
\end{remark}

The lemma \ref{lemma:comparison_regP_and_regC} leads to the following comparison of the two Abel-Jacobi mappings. 
\begin{corollary}
One has 
\[ 
      AJ_P = AJ_C \circ \alt
\]
as maps $CH^p_{\hom}(X,n) \to \frac{ H^{2p-n-1}(X,\C) }{ H^{2p-n-1}(X,\Z(p)) +  F^p H^{2p-n-1}(X,\C)}$. 
\end{corollary}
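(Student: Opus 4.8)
The plan is to deduce everything from Lemma~\ref{lemma:comparison_regP_and_regC} together with the explicit formulas for $AJ_C$ and $AJ_P$ recalled around \eqref{eq:abel_jacobi_into_P}. The key observation is that those two formulas differ only in their third component, and that Lemma~\ref{lemma:comparison_regP_and_regC} identifies precisely that third component of $ev\circ r_P(Z)$ with the third component of $r_C(\alt Z)$, while its ``moreover'' part guarantees that the first two components are unchanged.

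First I would check that $AJ_C\circ\alt$ is well defined on $\CH^p_{\hom}(X,n)$, i.e.\ that $\alt$ carries cycles homologous to zero to cycles homologous to zero. Since $\alt$ is a chain map it descends to cohomology, so it suffices to argue at the level of currents: by the ``moreover'' part of Lemma~\ref{lemma:comparison_regP_and_regC} one has $\proj\circ ev(r_P(Z)) = \proj\circ r_C(Z)$, and by its main part $ev(r_P(Z)) = r_C(\alt Z)$; hence $\proj\circ r_C(\alt Z) = \proj\circ r_C(Z)$, which is a boundary exactly when $\proj\circ r_C(Z)$ is. In particular the \emph{same} bounding currents $S_0\in\calI^{2p-n-1}(X,\Z(p))$ and $S_1\in F^p\calD^{2p-n-1}(X)$ that witness $Z$ being homologous to zero also witness $\alt Z$ being homologous to zero; moreover $S_0$ may still be taken integral because $r_C(\alt Z)_0 = r_C(Z)_0 = T_0$ is the integral current associated to the algebraic cycle $Z$, so that $AJ_P(Z)$ and $AJ_C(\alt Z)$ a priori live in the asserted quotient $\tfrac{H^{2p-n-1}(X,\C)}{H^{2p-n-1}(X,\Z(p))+F^pH^{2p-n-1}(X,\C)}$.

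It then remains to compare formulas. Writing $r_C(Z) = (T_0,T_1,r_C(Z)_3)$, Lemma~\ref{lemma:comparison_regP_and_regC} gives at once
\[
  r_C(\alt Z) \;=\; ev(r_P(Z)) \;=\; \bigl(T_0,\,T_1,\,\textstyle\int_0^1 r_P(Z)_1\bigr).
\]
Plugging $\alt Z$ into $AJ_C(W) = r_C(W)_3 + S_0 - S_1$ with the bounding currents $S_0,S_1$ fixed above, and comparing with \eqref{eq:abel_jacobi_into_P}, we obtain $AJ_C(\alt Z) = \int_0^1 r_P(Z)_1 + S_0 - S_1 = AJ_P(Z)$; since both sides are independent of the choice of $S_0,S_1$ modulo $H^{2p-n-1}(X,\Z(p))+F^pH^{2p-n-1}(X,\C)$, this is the claimed identity. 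The only genuinely delicate point is the coefficient bookkeeping of the previous paragraph --- that replacing $Z$ by the rational cycle $\alt Z$ does not force one to enlarge the lattice quotiented out in the target --- and this is exactly what the integral refinement of Lemma~\ref{lemma:comparison_regP_and_regC} recorded in the remark following it provides; everything else is a direct substitution.
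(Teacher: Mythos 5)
Your argument is correct and essentially identical to the paper's: the paper likewise reduces the claim to the identity $\int_0^1 r_P(Z)_1 = r_C(\alt Z)_3$ and cites Lemma~\ref{lemma:comparison_regP_and_regC} together with the remark following it, exactly as you do. Your extra care about well-definedness (that the same bounding currents $S_0,S_1$ serve for $\alt Z$, via the ``moreover'' part of the lemma) is a welcome elaboration of what the paper leaves implicit, not a different route.
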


\begin{proof}
The statement is equivalent to $\int_0^1 r_P(Z)_1 = r_C(\alt Z)_3$, which follows from lemma  \ref{lemma:comparison_regP_and_regC} resp. the remark following it. 
\end{proof}

\paragraph*{Examples}

\begin{itemize}
\item For $n=0$ the regulator is just the cycle map and the cycles homologous to zero are
\[
    \CH^p_{\hom}(X,0) = \left\{ Z \in \CH^p(X) : \cl(Z) = 0 \text{ in } H^{2p}(X,\Z(p)) \right\}.
\]
The Abel-Jacobi map is $AJ_P(Z) = S_0 - S_1$, where $S_0$, $S_1$ are currents bounding $\cl(Z)$ such that $S_0$ is integral with coefficients in $\Z(p)$ and $S_1$ lies in $F^p \calD^{2p-1}$. 

Note that the Poincaré duality induces an isomorphism $J^{p,0}(X) \cong \frac{F^{m-p+1} H^{2m-2p+1}(X,\C)^\vee}{H^{2m-2p+1}(X,\Z(p))^\vee}$ and the image of $AJ(Z)$ under this isomorphism is given by $S_0$ only ($S_1$ acts trivial). Finally, writing $S_0 = [\xi]$ as the current of integration over a singular chain $\xi$, one reobtains the classical Abel-Jacobi map of Griffiths.

\item 
For $n = 1$ and any $Z \in z^p_\R(X,1)$ homologous to zero, its Abel-Jacobi value is the current on $X$ 
\[
     AJ(Z) = (2 \pi i)^{p-1} ( \int_Z \log(z) + 2 \pi i S_0 - S_1).
\] 

The currents $S_i$ satisfy $d S_0 = [Z \cap X {\times} \Rm]$ and $d S_1 = [Z] \wedge \dlog(z)$. 

\item 
In particular, if $Z = \sum \Gamma_\alpha$ is the sum of graphs of meromorphic functions $f_\alpha : V_\alpha \to \P^1$, this recovers a version of Levine's formula (\cite[p. 458]{Levine_LocalizationOnSingularVarieties} and \cite[4.5]{KLM_AbelJacobi}): By remark \ref{rmk:AJ_for_n_greater_p}, $S_1$ acts trivially in this case and thus, for $\xi$ any cycle with boundary $Z \cap (X {\times} \Rm)$, 
\[ 
AJ(Z) = (2 \pi i)^{p-1} ( \sum_\alpha \int_{V_\alpha \setminus f_\alpha^{-1} \Rm} \log f_\alpha + 2 \pi i \int_\xi ).
\]

\item Totaro's cycle $C(1)$ is homologous to zero. $T_0,T_1$ are actually zero (not only boundaries) so that one can choose $S_0 = S_1 = 0$. It's Abel-Jacobi image is thus 
\[
     AJ_P(C(1)) = \int r_P(Z) = \Li_2(1).
\]
\end{itemize}

\newpage
\appendix

\section{Requirements for the Deligne complexes} \label{appendix:requirements_on_D_complexes}
Denote by $(X,D)$ a pair consisting of a smooth projective variety and a normal crossing divisor $D \subset X$. Assume that to every such a pair and every integer $p$ (the weight) there is associated a (Deligne) complex of $\Z$ modules $C_\calD^\bullet(X,D,\Z(p))$. 
We summarize all the properties of these complexes that are needed to apply the general construction from section \ref{sec:abstract_regulator}.
\begin{itemize}
\item (Cycle map) There exist maps $\cl : Z^p(X \setminus D) \to C_\calD^{2p}(X,D,\Z(p))$ such that $d \circ \cl = 0$. 
\item (Flat pullback) For any holomorphic submersion $Y \xrightarrow{f} X$ one has a morphism of complexes  
      \[ 
          f^* : C_\calD^\bullet (X,D,\Z(p)) \to C_\calD^\bullet (Y,f^{-1}D,\Z(p)).
      \]
\item (Proper push forward) For any smooth proper morphism of pairs $(X,D) \xrightarrow{f} (X',D')$ one has a morphism of complexes  
      \[
           f_* : C_\calD^\bullet(X,D,\Z(p)) \to C_\calD^{\bullet-2\delta}(X',D',\Z(p-\delta))
      \]
      where $\delta = \dim_\C X - \dim_\C X'$ is the relative dimension. 
      In particular, $d f_* = f_* d$.
\item (Products) There exist external products 
      \[ 
             C_\calD^\bullet (X,D,\Z(p)) \otimes C_\calD^\bullet (X',D',\Z(q)) \xrightarrow{\boxtimes} C_\calD^\bullet (X \times X', D \boxtimes D',\Z(p+q))
      \]
      in the category of complexes. 
      Here $D \boxtimes D' = X \times D' + D \times X'$ denotes the exterior product of the divisors. 
      Furthermore there exist a partially defined (internal) product 
      \[ 
             C_\calD^\bullet(X,D,\Z(p)) \otimes C_\calD^\bullet(X,D,\Z(q)) \xrightarrow{\cap} C_\calD^\bullet(X,D,\Z(p+q)).
      \]
\end{itemize}

%There are a couple of compatibility conditions we expect to hold for the above structures. They are listed below:
The above structures should satisfy some compatibility conditions listed below:
\begin{itemize}
\item $\cl$ is compatible with flat pullback and push forward, that is, 
      \[
            \cl \circ f|_{X \setminus D}^* = f^* \circ \cl 
            \quad \text{ resp. }     \quad
            \cl \circ (f|_{X\setminus D})_* = f_* \circ \cl
      \]
      whenever $f|_{X \setminus D}$ is flat resp. proper. 
\item $\cl(X \setminus D)$ is an unit for $\cap$ and $\cl(\pt)$ is an unit for $\times$.
\item Push forward along the diagonal is injective.
\item Inverse mapping formula holds: For $f : X \to X'$ holomorphic, one has an equation 
      \[
           S \cap f_* T = f_* ( f^* S \cap T )
      \]
      whenever the right hand side is defined. 
\item Reduction to the diagonal: If $\Delta : X \to X \times X$ denotes the diagonal and $\Delta_X$ its image, then 
      \[
           \Delta_* ( S \cap T ) = ( S \boxtimes T ) \cap \cl(\Delta_X).
      \]
\item Compatibility of $\boxtimes$ and $\cap$ for cycles coming from geometry (i.e. of type $\cl(Z)$ for an algebraic variety $Z$): If the intersection on the right hand side exist, there is an equality 
%\item Compatibility of $\boxtimes$ and $\cap$, whenever one side is formed by cycles coming from geometry (i.e. are of type $\cl(Z)$ for an algebraic variety $Z$):
      \[
           (S_1 \boxtimes S_2) \cap (\cl(Z_1) \boxtimes \cl(Z_2)) = (S_1 \cap \cl(Z_1)) \boxtimes (S_2 \cap \cl(Z_2)).
      \]
\item $\boxtimes$ and $\cap$ are associative, $\boxtimes$ is graded-commutative.
\item $\cap$ is preserved by analytic isomorphisms: $\varphi_* S \cap \varphi_* T = \varphi_* (S \cap T)$, if $\varphi$ is orientation preserving.
\item Push forward preserves $\boxtimes$.
\end{itemize}

\section{Currents} \label{appendix:currents}
Here we briefly summarize the needed theory of currents on complex manifolds and their intersection. 
A current on a complex manifold $X$ is a continuous linear functional on the complex vector space of compactly supported smooth differential forms on $X$. 
The currents on $X$ form a complex vector space denoted by $\calD(X)$. 
This space is naturally equipped with a bigraduation by duality with forms where $\calD^{r,s}(X)$ is the set of currents that vanish on all test forms of bidegree $\neq (m-r,m-s)$, $m = \dim_\C X$. 
The associated single-graded vector spaces $\calD^n(X) := \oplus_{r+s = n}\calD^{r,s}(X)$ becomes a (cochain) complex via the differential of degree $+1$ that sends a current $T$ to the current $dT$ defined by 
\[
     d T(\eta) = (-1)^{\deg(T)+1} T(d \eta).
\]
For a pair $(X,D)$ as considered above, King \cite{King_LogComplexesOfCurrents} defined the quotient complex $\calD(X,\log D) := \calD(X) / \calD(X, on D)$ of $\log D$ currents, where $\calD(X , on D) \subset \calD(X)$ is the subcomplex formed by all those currents that vanish on smooth null-$D$ test forms (i.e. on forms whose restriction to $D_\reg$ is zero).
The cohomology of $\calD(X,\log D)$ is isomorphic to the $\C$-valued singular cohomology of $X \setminus D$. 

To compute cohomology with integral coefficients or coefficients in a ring $A$, one can use the complex of relative integral currents $\calI(X,D,A)$ as introduced by Federer/Fleming \cite{FedererFleming}. An integral current is a current that together with his boundary is a limit of (integration currents over) singular chains. The complex of relative integral currents is the quotient complex $\calI(X)/\calI(D)$ (if necessary, tensored with $A$) of integral currents on $X$ by the integral currents with support\footnote{The support $\spt(T)$ of a current is the smallest closed subset $C$ such that $T|_{X \setminus C} = 0$. 
The currents $[\log]$ and $[\dlog]$ for example both have support $\P^1(\C)$, while the support of $[Z]$ is the closure of $Z$.
} in $D$.    

% Examples
The main examples of currents arise from integration. 
So any subvariety $Z \subset X$ and every locally integrable differential forms $w$ give rise to currents on $X$ by means of integration
\[
    [Z](\eta) = \int_{Z_\reg} \eta , \hspace{2cm} [w](\eta) = \int_X w \wedge \eta.
\]
By the same formula, every algebraic subvariety in $X \setminus D$ gives rise to a current in $\calD(X,\log D)$ and even in $\calI(X,D,\Z)$. This is the simple extension of the current $[Z_\reg]$ to $X$ (\cite{Lelong_IntegrationSurUnEnsembleAnalytiqueComplexe}, \cite{Herrera_OnTheExtensionOfCurrents}). As a log current, this is the same as the current of integration over the regular part of the closure $\overline{Z} \subset X$. In particular, the current is $d$-closed and integral of bidegree $(p,p)$, where $p$ is the codimension of $Z \subset X$. 

If $\omega$ is a smooth differential form on $X \setminus D$ with at most logarithmic poles along $D$, then $[\omega]$ is also a current in $\calD(X,\log D)$. With the above defined differential, the induced map from logarithmic forms, $\calA(X,\log D) \to \calD(X, \log D)$ turns out to be a quasi-isomorphism of complexes \cite[Thm 2.1.2]{King_LogComplexesOfCurrents}. If the Hodge filtration is defined to be the usual descending filtration with respect to the first variable: $F^p \calD(X) := \bigoplus_{r \geq p} \calD^{r,s}(X)$, the this map becomes a filtered quasi-isomorphism (i.e. induces a quasi-isomorphism on each graded piece).

Any current $T$ can be multiplied with a smooth (even locally integrable) differential form $w$ by $(T \wedge w)(\eta) = T(w \wedge \eta)$. Especially when restricted to $0$-forms, one gets a $\calA^0(X)$-module structure.

\paragraph*{Functoriality}
Currents are functorial in $X$: covariant functorial for proper and contravariant for submersions, where the induced mappings are defined dually through pullback of forms and "fiber integration".
This functoriality extends to currents on pairs (log currents and relative integral currents) with properties as described in \ref{appendix:requirements_on_D_complexes}.

\paragraph*{Exterior product} 
For two currents $S,T$ on $X$ resp $Y$ there exists an associative graded-commutative (with respect to dimension) external product $\boxtimes$ that satisfies the product rule with respect to the differential $d$. It is uniquely defined by 
\[
   (S \boxtimes T) (\eta_1 \boxtimes \eta_2) = (-1)^{|T| |\eta_1|}  \cdot S(\eta_1) \cdot T(\eta_2).
\]

\paragraph*{Intersection product}
There is also an internal (intersection-) product $\cap$ for currents, that extends both the wedge product of forms and the transversal intersection of submanifolds. As the general intersection of submanifolds, it is only partially defined. 
Even if it is defined, the intersection of two integral currents need not be integral again. However, after moving currents in their cohomology class, their intersection exists and, if both operands are integral, is integral again.
In any case, it is associative, graded-commutative with respect to degree and satisfies the Leibniz rule with respect to $d$. 

In some special cases, the intersection product takes a concrete form. For example the intersection product of two currents associated to properly intersecting (real) subvarieties is equal to the current associated to the intersection of them, 
\[
    [Z_1] \cap [Z_2] = [Z_1 \cap Z_2].
\]
Another important case is the intersection of a current with the current associated to a differential form $w$. Under some conditions, 
\[
    T \cap [w] = T \wedge w.
\]
This holds for example if $T$ is locally flat and $w$ is smooth. If $w$ is a form with logarithmic poles along some normal crossing divisor $D$, then the wedge product is a priori only a log current, i.e. only defined on test forms that vanish on $D_\reg$. 
\\
If $T = [Z]$ is the current of integration over a complex subvariety, then as log currents,
\[
    ([Z] \wedge \omega)(\eta)  = \int_{(Z \setminus D)_\reg} w \wedge \eta.
\]

\paragraph*{Properties of the intersection product}
We will use the axioms of the intersection product of currents given by Robert Hardt in \cite{HardtRealAnalyticIntersectionTheory}. 
%Hardt examined the intersection for a large class of currents, the "real analytic chains", and gave conditions for their intersection to exist.
%He furthermore gives a list of axioms that uniquely determine this intersection theory. 
%If one is content that the intersection product is only partially defined, as we do, then one gets along with properties for real locally flat current (not necessarily integral). 
%
While Hardt considerred a subclass of currents, the "real analytic chains", we will use his axioms on the whole class of currents, with the constraint that the product is only partially defined. 
We now state his axioms and some further properties of this intersection theory (on the complex manifold $X$).
The following statements have to be read as: If one side of the equation exists, then so does the other, and both sides are equal. 
Let $R,S,T$ be locally flat currents of degrees $r,s,t$. Then 
\begin{itemize}
\item $(\lambda S) \cap T = \lambda ( S \cap T)$ for any constant $\lambda$.   
\item $S \cap T = (-1)^{rs} T \cap S$.   
\item $(S \cap T)|_U = S|_U \cap T|_U$ for every open subset $U$.
\item $\varphi_* (S \cap T) = \varphi_* S \cap \varphi_* T$ for every orientation-preserving analytic isomorphism between analytic manifolds.   
\item $\Delta_* (S \cap T) = (S \boxtimes T) \cap \Delta_*[X]$ where $\Delta$ is the diagonal embedding.
\item $(R \cap S) \cap T = R \cap (S \cap T)$ if the currents intersect suitably good. 
\item If $Y$ is another complex manifold, $L$ a locally flat current in $X \times Y$ and $R$ a locally flat current in $X$ such that $\pr_X|_{\spt L}$ is proper and $L \cap (R \boxtimes [Y])$ exists. Then the intersection of $\pr_* L$ with $R$ exist and $\pr_* L \cap R = \pr_* (L \cap (R \boxtimes [Y]))$.
\item $[0] \cap [0] = [0]$ in $\R^0 = \{ 0 \}$.
\end{itemize}

Hardt proved further properties of his intersection theory, for example
\begin{itemize}
\item (Leibniz rule)  $d (S \cap T) = d S \cap T + (-1)^{s} S \cap d T$. 
\item (Compatibility of $\boxtimes$ and $\cap$) 
      $(S \cap T) \boxtimes (P \cap Q) = (-1)^{t p} (S \boxtimes P) \cap (T \boxtimes Q)$.
\item (Inverse mapping formula)
      If $b : X \to Y$ is an analytic mapping such that $b^* T$ is a locally flat current on $X$
      and if $S$ is a locally flat current on $X$ such that $b|_{\spt S}$ is proper, then 
      \[
           T \cap b_* S = b_* ( (b^* T) \cap S).         
      \]  
\end{itemize}

\paragraph{Intersection and Hodge filtration}
The intersection product of currents, whenever it exists, adds up the degrees of the involved currents. It is however not clear (or at least unknown to the author) whether intersection preserves the Hodge filtration, i.e. whether
\[
    F^p \calD(X) \cap F^q \calD(X) \subset F^{p+q} \calD(X).
\]
Recall that the Hodge filtration is the descending filtration $F^p \calD(X) := \bigoplus_{r \geq p} \calD^{r,s}(X)$. That is, a current lies in $F^p$ if and only if it vanishes on all test forms of bidegree $(i,j)$, $i>m-p$. Whenever a sequence of currents $S_y$ weakly converges to a current $S_0$, and if each $S_y$ vanishes on all test forms of some $(r,s)$ type, then $S_0$ also vanishes on all $(r,s)$ forms. 

To see that the intersection is additive in the Hodge filtration, we apply this to the intersection of currents. For technical reasons, we restrict ourselves to flat currents (which is no restriction, since every homology class has a locally flat representative). 
It suffices to work locally, so that we can assume that $X = \R^n$. The intersection of two currents $S,T$ (if it exist) is the unique current that, when identified with its image on the diagonal in $\R^n \times \R^n$, is equal to the slice  $\pm < S \boxtimes T , \xi, 0 > $. 
%By definition, the intersection product is the unique current which, when identififed with a current supported on the diagonal, is the weak limit (if it exists) of the slices $\pm < S \times T , \xi, y > $ for $y \to 0$. 
Now the exterior product is additive in the bidegree, and thus it suffices to show the general statement that a slice $<T,\xi,y>$ has bidegree $(r+m,s+m)$, if $T$ has bidegree $(r,s)$. This follows, since the slice is defined as the weak limit 
\[
    <T,\xi,0> := \lim_{\rho \to 0} T \wedge \xi^*(\mathbbm{1}_{B(0,\rho)}w)
\]
for $w$ the volume form. Since $w$ has pure type $(m,m)$, pullback preserves the bidegree, and $\wedge$ is a bi-additive operation, the result follows.

\bibliographystyle{plain}
\bibliography{Bibliography}

% Provide Author information
\par \vspace{2\baselineskip} \footnotesize
\textsc{Thomas Weißschuh, Institut für Mathematik, Johannes Gutenberg Universität Mainz, Germany.} \\
\textit{E-Mail address:} \textbf{weisssth@uni-mainz.de}

\end{document}